\newtheorem{theo}{Theorem}[section]
\newtheorem{prop}[theo]{Proposition}
\newtheorem{lemm}[theo]{Lemma}
\newtheorem{rem}[theo]{Remark}
\newcommand{\al}{\alpha}
\newcommand{\be}{\beta}
\newcommand{\ga}{\gamma}
\newcommand{\Ga}{\Gamma}
\newcommand{\la}{\lambda}
\newcommand{\ep}{\epsilon }
\newcommand{\te}{\theta}
\newcommand{\De}{\Delta}
\newcommand{\de}{\delta}
\newcommand{\pa}{\partial}
\newcommand{\R}{{\mathbb R}^3}
\newcommand{\ri}{\rightarrow}
\newcommand{\na}{\nabla}
\newcommand{\N}{\mathbb{N}}
\newcommand{\bR}{{\mathbb R}}
\DeclareMathOperator*{\divg}{div}
\begin{document}
\baselineskip=18pt

\title[]{Global existence of solutions for the Hall-MHD equations}

\author{TongKeun Chang, Bataa Lkhagvasuren and MinSuk Yang}

\address{TongKeun chang: Department of
Mathematics, Yonsei University, 50 Yonsei-ro, Seodaemun-gu, Seoul,
South Korea 120-749 } 
\email{chang7357@yonsei.ac.kr }

\address{MinSuk Yang:  Department of
Mathematics, Yonsei University, 50 Yonsei-ro, Seodaemun-gu, Seoul,
South Korea 120-749 } 
\email{m.yang@yonsei.ac.kr  }

\address{Bataa Lkhagvasuren:  Mathematics Department, Chonnam National University,
Gwangju, South Korea}
\email{bataa@chonnam.ac.kr}

\thanks{T. Chang was supported by NRF-2020R1A2C1A01102531, B. Lkhagvasuren was supported by NRF-2022R1I1A1A01055459, and M. Yang was supported by NRF-2021R1A2C4002840.
}

\begin{abstract}
In this paper, we study the global well-posedness of  the incompressible Hall-MHD system for small initial data $( u_0, b_0) \in \dot B^{\frac3p -1}_{p,5} (\R) \times \Big( \dot B^{\frac3p -1}_{p,5} (\R) \cap \dot B^{\frac3p}_{p,1}(\R) \Big)$ for $1 < p < 5$.   
We get the result under the weaker regularity and integrability conditions of the initial data than the previous works.
We also give integral formulae for the solution. \\

\noindent
 2000  {\em Mathematics Subject Classification:}  primary 35K61, secondary 76D07. \\
\\
\noindent {\it  Key words  }: incompressible Hall- MHD, Global well-posedness. 
\end{abstract}

\maketitle

\section{\bf Introduction}
\label{S1}
\setcounter{equation}{0}
In this paper, we study the Cauchy problem of the incompressible Hall-MHD system
\begin{equation}
\label{E11}
\begin{cases} \vspace{2mm}
u_t -\De  u + (u \cdot \na) u - ( \na \times b) \times b  +\na p = 0 & \quad \R \times (0, \infty),\\
\vspace{2mm} b_t - \De  b +\na  \times ((\na \times b) \times b) -\na \times ( u \times b)   =0  & \quad \R \times (0, \infty),\\
\vspace{2mm}
\divg u = \divg b =0,\\
u|_{t =0} = u_0, \,\, b|_{t =0} = b_0,
\end{cases}
\end{equation}
where $u$ and $b$ are the velocity and the magnetic field, and $p$ is the pressure.
The initial velocity field $u_0$ and the initial magnetic field $b_0$ satisfy $\divg u_0 = \divg b_0 = 0$. 
The Hall term $\na \times ((\na \times b) \times b)$ makes the Hall-MHD system significantly different in the mathematical theory from the MHD system.
It also prevents straightforward adaptations from arguments used in the mathematical analysis of MHD and related models.

We study the global existence of the solution of \eqref{E11} in $C_b ([0, T]; \dot B^{\al}_{p,q}(\R))$.  
We list here a few known results related to this problem. 
Chae et al. \cite{CDG} established the global existence of weak solutions as well as the local well-posedness   for initial data in $H^{s} (\R)$ with $ s> \frac52$.
In \cite{CL}, authors showed the global well-posedness of smooth solutions for small initial data norms $ \| u_0\|_{\dot H^{\frac32} } +\| b_0 \|_{\dot H^{\frac32}}$ or $ \| u_0\|_{\dot B^{\frac12}_{2,1} } +\| b_0 \|_{\dot B^{\frac32}_{2,1}}$ (also see \cite{BF}).  
Wu, Yu  and Tang \cite{WYY } showed the local existence of solution when initial data in $H^s (\R)$ for $\frac32 < s < \frac52$ and global existence of solution when $\| u_0\|_{H^s({\mathbb R}^3) } $ and $\| b_0\|_{H^s({\mathbb R}^3) } $ are sufficiently  small. Dai  \cite{Dai} showed the local existence of solution for inital data  in  $ u_0 \in   H^{s-1}({\mathbb R}^{3})$ and $ b_0 \in H^{s +\ep}({\mathbb R}^{3}) , s > \frac32, \,\, \ep > 0$  (also see \cite{Zs}).  Danchin and Tan \cite{DT} showed the global existence of solution when the initial data is in  $ u_0 \in   \dot B^{\frac{3}p-1}_{p,1}({\mathbb R}^{3})$ and $ b_0 \in \dot B^{\frac{3}p -1}_{p,1}({\mathbb R}^{3}) \cap \dot B^{\frac{3}p}_{p,1}({\mathbb R}^{3}),  1 \leq  p\leq  \infty$ with small initial data  (see  \cite{WZ}).  Zhang \cite{Zn} showed the global existence of solution when the initial data in  $u_0, \, b_0 \in  H^s (\R)$ with $s\ge 3$ have sufficiently small norm $ \|u_0\|_{\dot B^{\frac12}_{2,\infty}} +  \|b_0\|_{\dot B^{\frac12}_{2,\infty}} +  \|b_0\|_{\dot B^{\frac32}_{2,\infty}}$.

We consider the following integral formulas to \eqref{E11}: 
\begin{align}
\label{E12}
\begin{split}
u(x,t)  & = \int_{\R} \Ga(x-y, t) u_0 (y) dy +  \int_0^t \int_{\R} \na \Ga (x-y, t-s) {\mathbb P} (u \otimes u ) (y,s)dyds\\
& \qquad  - \int_0^t \int_{\R} \na \Ga(x-y, t-s)   :  {\mathbb P}\big(  ( b \otimes b) \big)(y,s) dyds  +  \frac12  \int_0^t \int_{\R}    {\mathbb P} \na \Ga(x-y, t-s)  | b (y,s)|^2 dyds,
\end{split}\\
\label{E13}
\begin{split}
b(x,t) &  = \int_{\R} \Ga(x -y, t) b_0 (y) dy +  \int_0^t \int_{\R}   \na^2 \Ga(x-y,t-s)b(y,s) \times b(y,s)   dyds\\
& \qquad  +  \int_0^t \int_{\R } \na \Ga(x-y, t-s) \times ( u\times b)(y,s) dyds,
\end{split}
\end{align} 
where  $ \Ga $ is the fundamental solution of the heat equation in $\R$ and ${\mathbb P}$ is the Helhmotz projection operator in $\R$.  In the integral formula \eqref{E12} and \eqref{E13},  the most difficult term to manage is 
\begin{align}
\label{E14}
T (b,b) (x,t) = 
\int_0^t \int_{\R}   \na^2 \Ga(x-y,t-s)b(y,s) \times b(y,s)   dyds
\end{align}
induced from Hall term.  We show that $ T$ is bounded on the homogeneous anisotropic Besov space $\dot B^{s, \frac{s}2}_{p,q} (\R \times (0, \infty))$ (see  Section \ref{S2}  for the definition). 
From it, we can estimate the solution $ (u, b)$ with the homogeneous anisotropic Besov space norm and find integral formulas \eqref{E12} and \eqref{E13} for the solution $(u,b)$.

Here are our main results.

\begin{theo}
\label{T1}
Let $u_0 \in \dot B^{\frac{3}p -1}_{p, 5} (\R)  \cap \dot B^{\al -1 -\frac2p}_{p, q} (\R)$ and $b_0 \in  \dot B^{\frac{3}p-1}_{p, 5} (\R) \,  \cap \, \dot B^{\frac{3}p }_{p,1} (\R)   \cap \dot B^{\al -\frac2p}_{p, q} (\R)$ with $\divg  u_0 = \divg  \, b_0 =0$ for some $p, q, \alpha$ satisfying $1< p < 5$, $1 \leq q \leq \infty$, and $\frac{5}p < \al < \infty$.
Then there is a positive constant $\ep_0$ depending only on $p$ and $\al$ such that if
\begin{align}
\| u_0\|_{\dot B^{  \frac{3}p -1}_{p, 5} (\R)}  + \| b_0\|_{\dot B^{  \frac{3}p -1 }_{p, 5} (\R)} + \| b_0\|_{\dot B^{  \frac{3}p  }_{p, 1} (\R)} < \ep_0,
\end{align}
then the equations \eqref{E11} have a solution $(u,b)$ satisfying 
\begin{align*}
u &\in   \dot B^{\al-1, \frac{\al -1}2}_{p,q} (\R \times (0, \infty)) \cap  \dot B^{\frac{5}p-1, \frac5{2p} -\frac12 }_{p,5} (\R \times (0, \infty)),\\
b &\in   \dot B^{\al ,\frac{\al}2 }_{p,q} (\R \times (0, \infty)) \cap  \dot B^{\frac{5}p, \frac5{2p} }_{p,1}  (\R \times (0, \infty)) \cap \dot B^{\frac{5}p -1,\frac5{2p} -\frac12  }_{p,5}  (\R \times (0, \infty)).
\end{align*}
Moreover, the integral formulas \eqref{E12} and \eqref{E13} hold.
\end{theo}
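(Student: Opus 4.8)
The plan is to realize $(u,b)$ as the unique fixed point of the integral map defined by \eqref{E12}--\eqref{E13}, using the boundedness of the Hall operator in \eqref{E14} on anisotropic Besov spaces as the decisive analytic input. Writing $w=(u,b)$ and letting $\Phi(w)=\calL(u_0,b_0)+\calB(w,w)$, where $\calL(u_0,b_0)$ denotes the heat evolution of the initial data and $\calB$ collects all the Duhamel (bilinear) terms in \eqref{E12}--\eqref{E13}, I would work in the product space $X=X_u\times X_b$ with
\begin{align*}
X_u &= \dot B^{\al-1,\frac{\al-1}2}_{p,q} \cap \dot B^{\frac5p-1,\frac5{2p}-\frac12}_{p,5},\\
X_b &= \dot B^{\al,\frac\al2}_{p,q} \cap \dot B^{\frac5p,\frac5{2p}}_{p,1} \cap \dot B^{\frac5p-1,\frac5{2p}-\frac12}_{p,5},
\end{align*}
all spaces taken over $\R\times(0,\infty)$ and equipped with the sum of the corresponding anisotropic Besov norms.

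First I would prove the linear bound $\|\calL(u_0,b_0)\|_X\le C\big(\|u_0\|_{\dot B^{\frac3p-1}_{p,5}}+\|b_0\|_{\dot B^{\frac3p-1}_{p,5}}+\|b_0\|_{\dot B^{\frac3p}_{p,1}}\big)$. This rests on the standard heat characterization (anisotropic trace theorem) for homogeneous Besov spaces: the parabolic time integration in the anisotropic norm costs exactly $\frac2p$ derivatives, so that the trace at $t=0$ maps $\dot B^{s,\frac s2}_{p,q}(\R\times(0,\infty))$ onto $\dot B^{s-\frac2p}_{p,q}(\R)$ and the heat extension is a bounded right inverse. The index arithmetic $\frac3p-1=(\frac5p-1)-\frac2p$ and $\frac3p=\frac5p-\frac2p$ is precisely what matches the data spaces to the target scales in $X$, and the two scales are controlled simultaneously since $u_0\in\dot B^{\frac3p-1}_{p,5}\cap\dot B^{\al-1-\frac2p}_{p,q}$ and $b_0\in\dot B^{\frac3p-1}_{p,5}\cap\dot B^{\frac3p}_{p,1}\cap\dot B^{\al-\frac2p}_{p,q}$.

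Next I would establish the bilinear estimate $\|\calB(w,w')\|_X\le M\|w\|_X\|w'\|_X$. Each summand of $\calB$ factors as a convolution operator applied to a pointwise product ($u\otimes u$, $b\otimes b$, $|b|^2$, or $u\times b$), so I would treat it in two layers: an anisotropic product estimate placing the product in the appropriate space, followed by the mapping property of the kernel operator. The key structural point is that, over $\R\times(0,\infty)$ with parabolic scaling, the effective dimension is $5$, so $\dot B^{\frac5p,\frac5{2p}}_{p,1}$ is the critical anisotropic Besov algebra: it embeds into $L^\infty$ and acts by multiplication, which is exactly what makes the products bounded on the scales $\frac5p-1$, $\frac5p$, and $\al$ (all positive for $1<p<5$, $\al>\frac5p$). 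For the terms carrying a single derivative $\na\Ga$ one uses the classical gain of one derivative of the Duhamel heat operator on the anisotropic scale; the delicate term is the Hall term \eqref{E14}, which carries $\na^2\Ga$ and hence no net regularity gain (parabolic maximal regularity). Here I would invoke the boundedness of the operator $T$ on $\dot B^{s,\frac s2}_{p,q}(\R\times(0,\infty))$ proved in Section \ref{S2}, applied at $s=\al,\frac5p,\frac5p-1$, after bounding $b\times b$ in each $\dot B^{s,\frac s2}_{p,q}$ by the algebra estimate. Making this index bookkeeping close --- choosing which factor sits in the $\dot B^{\frac5p}_{p,1}$ algebra and checking that $T(b\times b)$ returns to each of the three spaces composing $X_b$ --- is the principal obstacle, and it is exactly what dictates the three-space structure of $X_b$.

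Finally, with $\|\calL(u_0,b_0)\|_X\le C\ep_0$ and the bilinear bound in hand, I would run the usual fixed-point scheme: for $\ep_0$ small enough (so that, say, $4CM\ep_0<1$) the map $\Phi$ is a contraction on a small closed ball of $X$, producing a unique solution $w=(u,b)\in X$ of $w=\Phi(w)$, which is \eqref{E12}--\eqref{E13}. The asserted memberships $u\in X_u$ and $b\in X_b$ and the integral formulas then hold by construction, and a short additional argument using the regularity just gained confirms that $(u,b)$ solves \eqref{E11} in the distributional sense.
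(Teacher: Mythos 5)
Your overall strategy --- treating \eqref{E12}--\eqref{E13} as a fixed-point problem, with the heat-extension bound for the data, the product estimates hinging on the embedding $\dot B^{\frac5p,\frac5{2p}}_{p,1}\subset L^\infty$, and the boundedness of the Hall operator $T$ on $\dot B^{s,\frac s2}_{p,q}$ as the three inputs --- is exactly the paper's (the paper writes the Banach fixed point out explicitly as a Picard iteration with a Cauchy-sequence argument). However, there is a genuine gap in your smallness bookkeeping. Your linear estimate
\begin{align*}
\|\calL(u_0,b_0)\|_X\le C\bigl(\|u_0\|_{\dot B^{\frac3p-1}_{p,5}}+\|b_0\|_{\dot B^{\frac3p-1}_{p,5}}+\|b_0\|_{\dot B^{\frac3p}_{p,1}}\bigr)
\end{align*}
cannot hold: the left-hand side contains the $\dot B^{\al-1,\frac{\al-1}2}_{p,q}$ and $\dot B^{\al,\frac\al2}_{p,q}$ components of $X$, which by the trace/extension correspondence are controlled only by $\|u_0\|_{\dot B^{\al-1-\frac2p}_{p,q}}$ and $\|b_0\|_{\al-\frac2p}_{p,q}$-type norms, and the theorem does \emph{not} assume these are small. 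Consequently ``$\Phi$ is a contraction on a small closed ball of $X$'' is not available: your scheme as written either proves a weaker statement (requiring smallness of all data norms, including the high-regularity ones) or does not close.

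The repair is the two-tier structure the paper uses. Run the contraction only in the three critical norms $\dot B^{\frac5p-1,\frac5{2p}-\frac12}_{p,5}$ (for $u$ and $b$) and $\dot B^{\frac5p,\frac5{2p}}_{p,1}$ (for $b$), where the data are small of size $\ep$; this gives \eqref{E318} and convergence of the iterates. Separately, propagate the $\al$-level bounds by an induction of the form $\|u^{m+1}\|_{\al-1}\lesssim M^u_{\al-1,q}+\ep\,(\|u^m\|_{\al-1}+\|b^m\|_{\al})$, which is possible precisely because every product estimate (your ``algebra'' step, \eqref{E36}, \eqref{E310}) can be arranged to pair the large $\al$-norm with a small critical norm rather than with another large norm; this keeps the $\al$-norms uniformly bounded by a constant $M$ that need not be small. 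Note also that the contraction gives convergence only in the critical norms; membership of the limit in $\dot B^{\al-1,\frac{\al-1}2}_{p,q}\times\dot B^{\al,\frac\al2}_{p,q}$ then follows from the uniform bounds (weak-$*$ compactness or Fatou on the Littlewood--Paley pieces), not from the fixed-point theorem itself --- a point your write-up glosses over when it asserts a fixed point ``in $X$.''
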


To prove Theroem \ref{T1}, we shall use the boundedness of the operator  $T $ defined in \eqref{E14} on $\dot B^{s, \frac{s}2}_{p,q}(\R \times (0, \infty))$ for $ 1 < p < \infty, \,\, 1 \leq  q \leq \infty$ and $0 < s < \infty$. 
More precisely,  the parabolic type Caldron-Zygmund singular integral theorem implies that $T$ is bounded on $\dot W^{2k, k}_{p} (\R \times (0, \infty))$ for $ k \in \N_0 $ and $ 1 < p < \infty$ (see Section \ref{S2} for the definition of $\dot W^{2k, k}_{p} (\R  \times (0, \infty))$). Since $ \dot B^{s, \frac{s}2}_{p,q} (\R \times (0, \infty)),  \,\, 1 \leq p, \, q \leq \infty$, $0 < s < \infty$ is an interpolation space  of $\dot W^{2k, k}_{p} (\R  \times (0, \infty))$ (see (1) of Proposition \ref{P24}),   $ T$ defined in \eqref{E14} is also bounded on $ \dot B^{s, \frac{s}2}_{p,q} (\R \times (0, \infty))$, $0 < s < \infty$ (see (2) of Proposition \ref{P32}). 
From Lemma \ref{L25} and Lemma \ref{L26}, we get the following estimate
 \begin{align*}
 \| T( b, b)\|_{\dot B^{s, \frac{s}2}_{p,q} (\R \times (0, \infty))} 
&\lesssim \sum_{1 \leq i, j \leq 3}\|  b_ib_j\|_{\dot B^{s, \frac{s}2}_{p,q} (\R \times (0, \infty))} \\
&\lesssim \|  b\|_{L^\infty (\R \times (0, \infty))}\|   b\|_{\dot B^{s, \frac{s}2}_{p,q} (\R \times (0, \infty))}\\
&\lesssim \|  b\|_{\dot B^{\frac{5}p, \frac{5}{2p}}_{p, 1} (\R \times (0, \infty))}\|   b\|_{\dot B^{s, \frac{s}2}_{p,q} (\R \times (0, \infty))}.
 \end{align*}
Using this inequality and an iteration method, we can show the existence of solutions to \eqref{E11}.

Using Lemma \ref{L28}, we can obtain the following theorem.

\begin{theo}
\label{T2}
Let  $ 1 < p < 3$, $ 1 +\frac2p< \al$ and  $(u, b)$ be the solution found in Theorem \ref{T1}. Then
\begin{align*}
u &\in  C_b ([0, \infty); \dot B^{\al-1-\frac2p}_{p,q} (\R)) \cap  C_b ([0, \infty); \dot B^{\frac{3}p-1 }_{p,5} (\R)),\\
b &\in  C_b ([0, \infty); \dot B^{\al  -\frac2p}_{p,q} (\R)) \cap C_b ([0, \infty); \dot B^{\frac{3}p }_{p,1}  (\R)) \cap C_b ([0, \infty); \dot B^{\frac{3}p -1 }_{p,5}  (\R)).
\end{align*}
\end{theo}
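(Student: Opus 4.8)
The plan is to read off Theorem \ref{T2} from the space-time regularity of $(u,b)$ already established in Theorem \ref{T1}, by applying the trace embedding of Lemma \ref{L28} to each of the five homogeneous anisotropic (parabolic) Besov spaces appearing there. Indeed, the conclusions of Theorem \ref{T2} are exactly the time-continuous spatial regularities obtained by restricting each space-time space $\dot B^{s, \frac{s}2}_{p,r}(\R \times (0,\infty))$ to the time slices $\{t = t_0\}$.

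The mechanism I would use is the trace embedding
\[
\dot B^{s, \frac{s}2}_{p,r}(\R \times (0, \infty)) \hookrightarrow C_b\big([0, \infty); \dot B^{s - \frac2p}_{p,r}(\R)\big), \qquad s > \tfrac2p,
\]
which I expect to be the content of Lemma \ref{L28}. The loss of $\frac2p$ derivatives is the standard cost of a codimension-one trace in the time direction, where the parabolic scaling assigns time weight two, so the trace costs $2 \cdot \frac1p = \frac2p$ spatial derivatives while preserving the indices $p$ and $r$. Applying this to the spaces in Theorem \ref{T1} gives, for $u$, the traces $\dot B^{(\al-1)-\frac2p}_{p,q} = \dot B^{\al-1-\frac2p}_{p,q}$ from $\dot B^{\al-1, \frac{\al-1}2}_{p,q}$ and $\dot B^{(\frac5p-1)-\frac2p}_{p,5} = \dot B^{\frac3p-1}_{p,5}$ from $\dot B^{\frac5p-1, \frac5{2p}-\frac12}_{p,5}$; and, for $b$, the traces $\dot B^{\al-\frac2p}_{p,q}$, $\dot B^{\frac3p}_{p,1}$, and $\dot B^{\frac3p-1}_{p,5}$ from $\dot B^{\al, \frac\al2}_{p,q}$, $\dot B^{\frac5p, \frac5{2p}}_{p,1}$, and $\dot B^{\frac5p-1, \frac5{2p}-\frac12}_{p,5}$ respectively. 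These are precisely the two displayed lines of Theorem \ref{T2}.

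It then remains to check that the hypotheses $1 < p < 3$ and $1 + \frac2p < \al$ are exactly what makes the admissibility condition $s > \frac2p$ hold in every application. The two $\dot B^{\frac5p-1,\cdots}_{p,5}$ traces require $\frac5p - 1 > \frac2p$, i.e.\ $\frac3p > 1$, i.e.\ $p < 3$; this is the only place the constraint $p<3$ enters. The $u$-term with $s = \al - 1$ requires $\al - 1 > \frac2p$, i.e.\ $\al > 1 + \frac2p$, which simultaneously covers the $b$-term with $s = \al$; and the term $s = \frac5p$ needs only $\frac5p > \frac2p$, which is automatic. Thus the stated hypotheses are sharp for this argument.

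The entire substance of the proof is therefore carried by Lemma \ref{L28}, and I expect the main difficulty to lie there rather than in the bookkeeping above. Beyond identifying the $\frac2p$ loss, the delicate point is to upgrade the a priori bound into genuine continuity in $C_b$ on the closed half-line $[0,\infty)$ --- in particular continuity down to $t=0$, where the trace must agree with the initial datum. In the homogeneous setting this needs uniform control of the low-frequency part so that the trace map is well defined and bounded, and the endpoint case $r = q = \infty$ is the most sensitive; granting Lemma \ref{L28}, Theorem \ref{T2} follows by the termwise application and index check described above.
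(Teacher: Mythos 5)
Your proposal is correct and is exactly the paper's argument: the paper proves Theorem \ref{T2} by a one-line appeal to Lemma \ref{L28}, i.e.\ the trace embedding $\dot B^{s,\frac{s}2}_{p,q}(\R\times(0,\infty))\subset C_b([0,\infty);\dot B^{s-\frac2p}_{p,q}(\R))$ for $s>\frac2p$, applied to each of the five space-time spaces from Theorem \ref{T1}. Your index bookkeeping, including the identification of $p<3$ and $\al>1+\frac2p$ as precisely the conditions making $s>\frac2p$ in every application, matches what the paper leaves implicit.
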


\begin{rem}
\begin{itemize}

\item[(1)]
Compared to the results in   \cite{CDG},  \cite{CL}, \cite{BF},  \cite{WYY }, \cite{Dai} and  \cite{Zs},  we   improve the range of integrability $ 1 < p < 3$ and regularity $1 +\frac2p < \al < \infty$.

\item[(2)]
Compared to the results in \cite{DT}, we extend the range of $ q = 5$ and $1 +  \frac2p < \al < \infty$.

\item[(3)]
As the author commented in \cite{Zn}, the result in it is restricted to $p =2$.

\item[(4)]
Our solution is represented by the integral formula with \eqref{E12} and \eqref{E13}.

\end{itemize}
\end{rem}

This paper is organized as follows. 
In Section \ref{S2}, we recall some known results and define homogeneous anisotropic Besov space. 
We also introduce basic properties of homogeneous anisotropic Besov space and estimates that are useful for our purpose. 
In Section \ref{S3}, we present the proof of Theorem 1.1.

\section{Notations and Definitions}
\label{S2}
\setcounter{equation}{0}

Throughout this paper, we denote a generic constant by $c$, which can be changed from line to line. 
We denote $D^{k}_x D^{m}_t = \frac{\pa^{|k|}}{\pa x^{k}} \frac{\pa^{m} }{\pa t}$ for multi-indices $k$ and $m \in \N_0: = {\mathbb N} \cup \{0\}$.
We denote by $X'$ the dual space of a Banach space $X$ and by $L^p(0, \infty;X)$ the usual Bochner space for $1\leq p\leq \infty$.
For $0< \theta<1$ and $1\leq p\leq \infty$, we denote by $(X,Y)_{\theta,p}$ and $[X, Y]_\te$  the real interpolation spaces and the complex interpolation spaces between the Banach spaces $X$ and $Y$, respectively.
For $1\leq p\leq \infty$, we write the conjugate exponent of $p$ by $p'$, i.e., $1/p+1/p'=1$.

For $k \in \N_0$, $\dot W^k_p ({\mathbb R}^3), \,  1\leq p\leq \infty$ is  the usual homogeneous Sobolev space in ${\mathbb R}^3$. 
Analogously, for $s\in {\mathbb R}$, $\dot B^s_{p,q} ({\mathbb R}^3), 1\leq p,q\leq \infty$  is the usual homogeneous Besov space in ${\mathbb R}^3$.

Let $s\in {\mathbb R}$ and $1\leq p \leq \infty$. We define an
anisotropic homogeneous Sobolev space $\dot W^{s, \frac{s}2}_p ({\mathbb R}^{n+1})$, $ n \geq 1$ by
\begin{eqnarray*}
\dot W^{s, \frac{s}2}_p  ({\mathbb R}^{n+1}) = \{ f \in {\mathcal S}'({\mathbb
R}^{n+1}) \, | \, f  = h_{s} * g, \quad \mbox{for some} \quad g   \in L^p ({\mathbb R}^{n+1}) \}
\end{eqnarray*}
with norm $\|f\|_{\dot W^{s, \frac{s}2}_p  ({\mathbb R}^{n+1})} : = \| g \|_{L^p({\mathbb R}^{n+1})} \,  = \|  h_{-s} * f  \|_{L^p({\mathbb R}^{n+1})},$
 where
$*$ is a convolution in ${\mathbb R}^{n+1}$ and ${\mathcal S}^{'}({\mathbb
R}^{n+1})$
 is the dual space of the Schwartz space
${\mathcal S}({\mathbb R}^{n+1})$.
Here $h_s $  is a distribution in ${\mathbb R}^{n+1}$ whose Fourier transform
in ${\mathbb R}^{n+1}$ is defined by
\begin{eqnarray*}
{\mathcal F}_{x,t} ( h_{s}) (\xi,\tau) = c_s(  4\pi^2 |\xi|^2 + i
\tau)^{-\frac{s}{2}}, \quad (\xi, \tau) \in {\mathbb R}^n \times {\mathbb R},
\end{eqnarray*}
where ${\mathcal F}_{x,t}$ is the Fourier transform in ${\mathbb R}^{n+1}$. 
Here the function $(  4\pi^2 |\xi|^2 + i
\tau)^{-\frac{s}{2}}$ is defined  as  $e^{-\frac{s}2 \ln(4\pi^2 |\xi|^2 + i\tau)}$, where the branch line of the logarithm that  is a negative  real line for real arguments is chosen.

Using the multiplier theorem for $L^p({\mathbb R}^{n+1})$ (see  Section 2 in \cite{CK} or Section 6 in \cite{BL}), in  the case  $\alpha=k \in {\mathbb N} \cup \{ 0\}$,  
\begin{align*}
 \|f\|_{\dot W^{2k, k}_p  ({\mathbb R}^{n+1})  } & \approx \sum_{|\be| +2 l = 2k} \| D_{x^\be}^\be D^{l}_{t}   f\|_{L^p ({\mathbb R}^{n+1})}.
\end{align*}
In particular,  $\displaystyle\| f\|_{\dot W^{0,0}_p({\mathbb R}^{n+1})}  = \| f\|_{ L^p({\mathbb R}^{n+1})}$ and 
\begin{align*}
\| f\|_{\dot W^{2, 1 }_{p}({\mathbb R}^{n+1}) }
            =  \| D_t  f\|_{L^p({\mathbb R}^{n+1})}
               + \|D^2_x f\|_{L^p({\mathbb R}^{n+1})}.
\end{align*}
See  \cite{CK}.

Fix a Schwartz function $\phi\in {\mathcal S} ({\mathbb R}^{4})$ satisfying $\widehat{\phi}(\xi,\tau) > 0$ for $\frac12 < |\xi| + |\tau|^\frac12 < 2$, $\widehat{\phi}(\xi,\tau)=0$ elsewhere, and
$\sum_{j \in \mathbb{Z}} \widehat{\phi}(2^{-j}\xi, 2^{-2j}\tau) =1$ for $(\xi,\tau) \neq 0$.
Let
\begin{align*}
\widehat{\phi_j}(\xi,\tau) &:= \widehat{\phi}(2^{-j} \xi, 2^{-2j}\tau) \quad (j = 0, \pm 1, \pm 2 , \cdots) \\
\widehat{\psi}(\xi,\tau) &:= 1- \sum_{j=1}^\infty \widehat{\phi} (2^{-j} \xi, 2^{-2j} \tau).
\end{align*}
We define the homogeneous anisotropic Besov spaces 
\begin{align*}
\dot B^{s, \frac{s}2}_{p,q} ({\mathbb R}^{4}) = \{ f \in {\mathcal S}'({\mathbb R}^{4}) \mid \| f\|_{\dot B^{s, \frac{s}2}_{p,q} ({\mathbb R}^{4})  } < \infty \}
\end{align*}
with the norm
\begin{align*}
\| f\|_{\dot B^{s, \frac{s}2}_{p,q} ({\mathbb R}^{4})} & =  \left(\sum_{j\in \mathbb{Z}}  2^{q s j}\| f* \phi_j \|^q_{L^p ({\mathbb R}^{4})} \right)^\frac1q,
\end{align*}
where $*$ is $4$ dimensional convolution.

The properties of the anisotropic Besov spaces are similar to those of usual Besov spaces. In particular, the following properties will be used later.
\begin{prop}
\label{P21}
Let $s_0, s_1 \in \bR$.
\begin{itemize}
\item[(1)]
For  $1 \leq  p, \,q \leq  \infty$,
\begin{align*}
&( \dot W^{ s_0, \frac{s_0}2}_p   ({\mathbb R}^{4}),   \dot W^{ s_1, \frac{s_1}2}_p ({\mathbb R}^{4} ))_{\te, q} = \dot B^{s, \frac{s}2}_{p,q} ({\mathbb R}^{4}), \quad [ \dot W^{ s_0, \frac{s_0}2}_p   ({\mathbb R}^{4}),   \dot W^{ s_1, \frac{s_1}2}_p ({\mathbb R}^{4} )]_{\te} = \dot W^{s, \frac{s}2}_{p} ({\mathbb R}^{4}) 
\end{align*}
where $s = s_0  (1 -\te) + s_1 \te$ and $0 < \te<1$, . 

\item[(2)]
If $ 1 \leq p_0\leq p_1  \leq \infty, \,  1 \leq r_0\leq r_1  \leq \infty$ and $ s_0\geq s_1$ with $ s_0 - \frac{5}{p_0} =s_1 - \frac{5}{p_1}$, then 
\begin{align*}
\dot  W^{ s_0,  \frac{s_0}2  }_{p_0} ({\mathbb R}^{4} ) \subset   \dot W^{s_1, \frac{s_1}2  }_{p_1} ({\mathbb R}^{4}), \quad
\dot  B^{ s_0, \frac{ s_0}2  }_{p_0, r_0} ({\mathbb R}^{4} ) \subset   \dot B^{s_1,  \frac{s_1}2  }_{p_1, r_1} ({\mathbb R}^{4}).
\end{align*}

\item[(3)]
For $0 < s < \infty$ and $ 1  < p  <  \infty$,
\begin{align*}
\dot W^{s,\frac{s}2}_{p} ({\mathbb R}^{4})   &=L^p ({\mathbb R}; \dot W^{s}_{p} (\R)) \cap L^p (\R; \dot W^{\frac{s}2}_{p}({\mathbb R})).
 \end{align*}

\item[(4)]
If   $f \in   \dot B_{p,q}^{s, \frac{s}2} ({\mathbb R}^{4})$ or $f \in   \dot W_{p}^{s, \frac{s}2} ({\mathbb R}^{4}), \, \frac{2}p < s < \infty$, then $f|_{t =0 } \in  \dot  B_{p,q}^{s -\frac{2}p }(\R )$ or $f|_{t =0 } \in  \dot  B_{p,p}^{s -\frac{2}p }(\R )$ with
\begin{align*}
\| f|_{t=0} \|_{ \dot B_{p,p}^{s -\frac{2}p } (\R )} \lesssim \| f \|_{\dot  W_{p}^{s,\frac{s}2} ({\mathbb R}^{4})}, \quad
\| f|_{t=0} \|_{ \dot B_{p,q}^{s -\frac{2}p } (\R )} \lesssim \| f \|_{\dot  B_{p,q}^{s,\frac{s}2} ({\mathbb R}^{4})}.
\end{align*}


\end{itemize}
\end{prop}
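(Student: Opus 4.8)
The plan is to handle all four assertions through the single unifying device of the parabolic lift operators $J^\sigma := h_{-\sigma}*$, whose symbol is $(4\pi^2|\xi|^2 + i\tau)^{\sigma/2}$, together with the anisotropic Littlewood--Paley blocks $f*\phi_j$. By the very definition of the spaces, $J^s$ is an isometric isomorphism from $\dot W^{s,\frac{s}2}_p(\RN)$ onto $L^p(\RN)$, and $J^{s_1-s_0}$ carries $\dot W^{s_0,\frac{s_0}2}_p$ isometrically onto $\dot W^{s_1,\frac{s_1}2}_p$; thus the whole scale is generated by the fractional powers of one positive operator. The essential analytic input throughout is the parabolic Mikhlin--H\"ormander multiplier theorem on $L^p(\RN)$, $1<p<\infty$ (Section 2 of \cite{CK}, Section 6 of \cite{BL}), applied to symbols that are smooth off the origin and homogeneous of degree $0$ under the parabolic scaling $(\xi,\tau)\mapsto(\lambda\xi,\lambda^2\tau)$.

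For Part (1), I would first show that $J$ has bounded imaginary powers: the symbol $(4\pi^2|\xi|^2+i\tau)^{i\gamma/2}$ is parabolically homogeneous of degree $0$ and smooth away from the origin, so the multiplier theorem gives $\|J^{i\gamma}\|_{L^p\to L^p}$ bounded with at most polynomial growth in $\gamma$. The standard identification of the complex interpolation spaces of domains of an operator with bounded imaginary powers then yields $[\dot W^{s_0}_p,\dot W^{s_1}_p]_\te = \dot W^s_p$. For the real interpolation identity, I would apply the lift $J^{s_0}$ to reduce to the pair $(L^p,\dot W^{\sigma,\frac{\sigma}2}_p)$ with $\sigma=s_1-s_0$ and evaluate the $K$-functional through the blocks $f*\phi_j$: since $(4\pi^2|\xi|^2+i\tau)^{\sigma/2}$ has size $\sim 2^{\sigma j}$ on the parabolic support of $\widehat{\phi_j}$, a Bernstein-type estimate gives $\|J^\sigma(f*\phi_j)\|_{L^p}\approx 2^{\sigma j}\|f*\phi_j\|_{L^p}$, and the real interpolation norm collapses to the weighted $\ell^q$ sum defining $\dot B^{s,\frac{s}2}_{p,q}$.

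Parts (2) and (3) are then essentially mechanical multiplier statements once the parabolic scaling (homogeneous dimension $5=3+2\cdot 1$) is kept in view. For the Sobolev embedding in (2), after lifting I reduce to showing that the parabolic Riesz potential with symbol $(4\pi^2|\xi|^2+i\tau)^{-(s_0-s_1)/2}$ maps $L^{p_0}(\RN)$ into $L^{p_1}(\RN)$; its kernel is parabolically homogeneous of degree $-(5-(s_0-s_1))$, and the gain $\frac1{p_0}-\frac1{p_1}=\frac{s_0-s_1}5$ is exactly the admissible Hardy--Littlewood--Sobolev exponent. The Besov embedding follows blockwise from Bernstein's inequality on each $f*\phi_j$ together with $\ell^{r_0}\hookrightarrow\ell^{r_1}$. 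For (3), the inclusion $\subset$ holds because $(4\pi^2|\xi|^2)^{s/2}(4\pi^2|\xi|^2+i\tau)^{-s/2}$ and $(i\tau)^{s/2}(4\pi^2|\xi|^2+i\tau)^{-s/2}$ are bounded $L^p$ multipliers, so the pure spatial and pure temporal fractional derivatives are controlled by $\|f\|_{\dot W^{s,\frac{s}2}_p}$; the reverse inclusion comes from upgrading the pointwise bound $\bigl| 4\pi^2|\xi|^2+i\tau\bigr|^{s/2}\lesssim (4\pi^2|\xi|^2)^{s/2}+|\tau|^{s/2}$ to a multiplier estimate, which is where $1<p<\infty$ is indispensable.

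For Part (4), the anisotropic trace theorem, the loss of $\frac2p$ derivatives reflects that the time frequency carries homogeneous weight $2$. Using $f=\sum_j f*\phi_j$ and the one-dimensional Bernstein inequality in $t$ for functions with time-frequency support $\sim 2^{2j}$, I get $\|(f*\phi_j)|_{t=0}\|_{L^p(\R)}\lesssim 2^{\frac{2j}p}\|f*\phi_j\|_{L^p(\RN)}$, so that $2^{(s-\frac2p)j}\|(f*\phi_j)|_{t=0}\|_{L^p(\R)}\lesssim 2^{sj}\|f*\phi_j\|_{L^p(\RN)}$; summing in $\ell^q$ identifies the trace space as $\dot B^{s-\frac2p}_{p,q}(\R)$. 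The distinction between the two target indices is explained by noting that $\dot W^{s,\frac{s}2}_p$ is the anisotropic Triebel--Lizorkin space $\dot F^{s,\frac{s}2}_{p,2}$, whose trace carries third index $p$ regardless of the inner index $2$, whereas the trace of $\dot B^{s,\frac{s}2}_{p,q}$ keeps the index $q$; the hypothesis $s>\frac2p$ guarantees convergence of the defining series so that the trace is meaningful. I expect the genuine bookkeeping to concentrate in the real interpolation step of (1) and in this trace computation, where the parabolic anisotropy forces careful tracking of the $2^{2j}$ weight on the time frequency; once the parabolic multiplier theorem is available, Parts (2) and (3) follow comparatively directly.
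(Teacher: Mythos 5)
Your proposal and the paper diverge at the very first step: the paper does not prove Proposition \ref{P21} at all, but disposes of it in the remark immediately following by citing Theorems 3.7.1, 3.7.3, 3.6.7 and 4.5.2 of \cite{amman-anisotropic} and Theorems 6.4.5, 6.5.1 and 6.6.1 of \cite{BL}. You instead rebuild all four parts from the parabolic lift $J^\sigma = h_{-\sigma}*$, the parabolic Mikhlin multiplier theorem, and the anisotropic Littlewood--Paley blocks. This is essentially the machinery underlying the cited theorems, and your outline is correct; what it buys is a self-contained account in which the parabolic homogeneous dimension $5=3+2$ and the weight $2^{2j}$ on the time frequency appear explicitly, which is exactly what the later estimates in the paper (Lemmas \ref{L26} and \ref{L29}, Proposition \ref{P32}) trade on. What the citations buy the paper is the full stated exponent ranges with no further work.

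Two points in your sketch deserve flagging. First, the multiplier-based steps reach (1) and (2) only for $1<p<\infty$ (respectively $1<p_0<p_1<\infty$): the imaginary powers $(4\pi^2|\xi|^2+i\tau)^{i\gamma/2}$ of the \emph{homogeneous} symbol are not bounded on $L^1$ or $L^\infty$, and Hardy--Littlewood--Sobolev fails at the endpoints, so the Sobolev halves of (1) and (2) at $p\in\{1,\infty\}$ are not covered by your argument (the Besov halves, which go block by block through Bernstein and Young, do survive for all $1\leq p\leq\infty$). Relatedly, for the real interpolation identity the equivalence $\|J^\sigma(f*\phi_j)\|_{L^p}\approx 2^{\sigma j}\|f*\phi_j\|_{L^p}$ gives only one inequality between the $K$-functional and the weighted $\ell^q$ sum; the converse requires exhibiting, for each scale $t\sim 2^{-\sigma j}$, a splitting $f=f_0+f_1$ built from partial sums of the blocks (equivalently, the retract onto a weighted sequence space), which should be stated rather than absorbed into ``collapses.'' Second, in (4) the restriction $(f*\phi_j)|_{t=0}$ has spatial spectrum in the ball $\{|\xi|\lesssim 2^j\}$, not in an annulus, so the $k$-th spatial Littlewood--Paley piece of $f|_{t=0}$ collects contributions from all $j\geq k-c$; it is precisely this regrouped sum that converges because $s>\frac2p$. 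Your closing remark gestures at this, but the regrouping is where the trace inequality is actually won and it should be written out.
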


\begin{rem}
We refer to Theorem 3.7.1 in \cite{amman-anisotropic} and Theorem 6.4.5 in \cite{BL} for the proof of (1), and Theorem 3.7.3 in \cite{amman-anisotropic}, Theorem 6.5.1 in \cite{BL} for (2) and Theorem 3.6.7 in \cite{amman-anisotropic} for (3) and Theorem 4.5.2 in \cite{amman-anisotropic} and  Theorem 6.6.1 in \cite{BL} for (4).
\end{rem}

The space $\dot  {W}^{s, \frac s2 }_{p} (\R \times (0, \infty))$ is defined to be the restriction of $ \dot {W}^{s, \frac s2 }_{p} ({\mathbb R}^4)$ with the norm
\begin{align}\label{1004-1}
\|f\|_{ \dot W^{s, \frac{s}2 }_{p} (\R \times (0, \infty))}=\inf\{ \|F\|_{ \dot W^{s, \frac{s}2 }_{p}({\mathbb R}^4)} \mid F\in  W^{s, \frac{s}2}_{p}({\mathbb R}^4)\mbox{ with } F|_{\R \times (0, \infty)}=f\}.
\end{align}
Similarly, the space $\dot  {B}^{s, \frac s2 }_{p,q} (\R \times (0, \infty))$ is defined to be the restriction of $\dot  {B}^{s, \frac s2 }_{p,q} ({\mathbb R}^4)$ in the same sense.
To explain interpolation spaces defined in $\R \times (0, \infty)$, we introduce retraction and coretraction. Let $A$ and $B$ be two Banach spaces. An operator $R: A \ri B$ is said to be retraction if there is an operator $S: B \ri A$ such that $RS = I_B$, where  $I_B: B \ri B$ is the identity operator in $B$. 
The operator $S$ is said to be a coretraction with respect to $R$. 
The following proposition is the conclusion of Theorem 1.2.4 in \cite{Tr}.

\begin{prop}
\label{P23}
Let $A_0, \,  A_1, \, B_0$ and $B_1$ be Banach spaces. 
If $R: A_i \ri B_i, \, i = 0,1$ is a retraction and $S: B_i \ri A_i, \, i = 0,1$ its coretraction, then 
\begin{align*}
&S: [B_0, B_1]_\te \ri  \{ SR a \mid a \in [A_0, A_1]_\te \} = \{ S b \mid b  \in R([A_0, A_1]_\te) \},\\
&S: (B_0, B_1)_{\te,q} \ri \{  SR a \mid a \in (A_0, A_1)_{\te,q} \}   = \{ S b \mid b \in R((A_0, A_1)_\te) \}
\end{align*}
are bijections.
\end{prop}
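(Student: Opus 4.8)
The plan is to prove both assertions at once by writing $F(\cdot,\cdot)$ for either the complex functor $[\cdot,\cdot]_\te$ or the real functor $(\cdot,\cdot)_{\te,q}$, and to use only the two structural features common to every interpolation functor: functoriality (a bounded operator $T$ with $T:A_i\ri B_i$ for $i=0,1$ induces a bounded operator $T:F(A_0,A_1)\ri F(B_0,B_1)$) together with the continuous embeddings $F(A_0,A_1)\hookrightarrow A_0+A_1$ and $F(B_0,B_1)\hookrightarrow B_0+B_1$, which guarantee that each induced operator is the restriction of the original one acting on the sum space. Applying functoriality to $R$ and to $S$, I would first obtain bounded operators $R:F(A_0,A_1)\ri F(B_0,B_1)$ and $S:F(B_0,B_1)\ri F(A_0,A_1)$.

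First I would transfer the retraction identity $RS=I_B$ to the level of the interpolation spaces. Since the induced operators agree with the originals on $B_0+B_1$ and $A_0+A_1$, the relation $RSb=b$ persists for every $b\in F(B_0,B_1)$; hence $P:=SR$ is idempotent on $F(A_0,A_1)$, because $P^2=S(RS)R=SR=P$. Injectivity of $S$ on $F(B_0,B_1)$ is then immediate: if $Sb=0$, then $b=RSb=R0=0$.

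Next I would identify the range. The operator $R:F(A_0,A_1)\ri F(B_0,B_1)$ is onto, since for any $b\in F(B_0,B_1)$ the element $a:=Sb$ lies in $F(A_0,A_1)$ and satisfies $Ra=RSb=b$. Consequently
\begin{align*}
\{\,SRa \mid a\in F(A_0,A_1)\,\}=S\big(R(F(A_0,A_1))\big)=\{\,Sb\mid b\in R(F(A_0,A_1))\,\}=S\big(F(B_0,B_1)\big),
\end{align*}
which is exactly the set equality asserted in the statement and exhibits this common set as the range of $S$ on $F(B_0,B_1)$. Combining this with the injectivity from the previous step shows that $S$ restricts to a bijection of $F(B_0,B_1)$ onto $\{\,SRa\mid a\in F(A_0,A_1)\,\}$, uniformly for $F=[\cdot,\cdot]_\te$ and $F=(\cdot,\cdot)_{\te,q}$.

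The only genuinely delicate point is the one invoked at the outset: that the operators induced on the interpolation spaces coincide, as maps, with $R$ and $S$ on the sum spaces, so that the pointwise identities $RS=I$ and $P^2=P$ transfer verbatim. The hard part will be justifying this compatibility from the concrete construction of the functors (the Calder\'on complex method and the $K$/$J$ real method), via the continuity of the embeddings into $A_0+A_1$ and $B_0+B_1$; once that is in hand, the remainder is purely formal, which is why the conclusion holds for both interpolation methods and is recorded as Theorem 1.2.4 in \cite{Tr}.
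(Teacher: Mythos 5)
Your proof is correct: the paper does not prove this proposition itself but simply quotes it as the conclusion of Theorem 1.2.4 in Triebel, and your argument — functoriality of both interpolation methods, persistence of the identity $RS=I$ on the sum spaces, idempotency of $SR$, injectivity of $S$ from $b=RSb$, and surjectivity of $R$ via $R(Sb)=b$ — is exactly the standard proof of that cited theorem. The compatibility point you flag at the end is indeed the only substantive ingredient, and it follows for both the real and complex methods from the continuous embeddings $F(B_0,B_1)\hookrightarrow B_0+B_1$ that you invoke.
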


\begin{rem}\label{rem0920}
From the definition of retraction $R:A_i \ri B_i $, coretraction  and the properties of interpolation spaces, $ R: (A_0, A_1)_{\te, q} \ri (B_0, B_1)_{\te,q}$ is surjective and $ S: (B_0, B_1)_{\te, q} \ri (A_0, A_1)_{\te,q}$ is injective.    Hence, for $ b \in  (B_0, B_1)_{\te, q}$, there is $ a \in (A_0, A_1)_{\te, q}$ such that $R a = b$. Then, from Proposition \ref{P23}, we have 
\begin{align*}
\| b \|_{(B_0, B_1)_{\te, q}} = \| Ra \|_{(B_0, B_1)_{\te, q}} \approx \|  SRa \|_{(A_0, A_1)_{\te, q}} =\|  Sb \|_{(A_0, A_1)_{\te, q}}.
\end{align*}
Similarly, we have 
\begin{align*}
\| b \|_{[B_0, B_1]_{\te}} = \| Ra \|_{[B_0, B_1]_{\te}} \approx \|  SRa \|_{[A_0, A_1]_{\te}} = \|  Sb \|_{[A_0, A_1]_{\te}}.
\end{align*}
\end{rem}

For $k \in \N_0$, we define $\dot {\mathbb W}_p^{2 k, k} (\R \times (0,\infty))$ to be the space of all $f \in {\mathcal S}'({\mathbb R}^{4})$ such that 
\[\| f\|_{\dot {\mathbb W}^{2k, k}_p (\R \times (0, \infty))} : =\sum_{2l + |\be|  =2 k} \| D_t^l D_x^{\be} f\|_{L^p (\R \times (0, \infty))} < \infty,\]
where $\be = (\be_1,  \cdots, \be_n) \in \N_0^n $ and $|\be | = \sum_{1 \leq i \leq n} \be_i $. We also define the interpolation spaces
\begin{align*}
\dot {\mathbb W}^{s, \frac{s}2}_{p} (\R \times (0, \infty))& : =  [L^{p}   (\R \times (0, \infty)),   \dot {\mathbb W}^{2k, k}_{p} (\R \times (0, \infty))]_{\te},\\
\dot {\mathbb B}^{s, \frac{s}2}_{p,q} (\R \times (0, \infty))& :  = ( L^p   (\R \times (0, \infty)),   \dot {\mathbb W}^{2 k, k}_p (\R \times (0, \infty)))_{\te, q},
\end{align*}
where $0 < \te<1$ and  $s = 2 k \te$.

Let $k \in \N_0$.
For $f \in \dot {\mathbb W}^{2 k, k}_p (\R \times (0, \infty))$, we define   the extension $E f \in \dot W^{2 k, k}_p ({\mathbb R}^{4})$  by
$E f (x,t) =
 f(x,t)$ for $t > 0$ and $E f(x,t) = \sum_{j =1}^{k+1} \la_j f(x,-jt)$ for $t < 0$, where $\la_1, \, \la_2, \cdots, \la_{k+1}$ satisfy $\sum_{j =1}^{k+1} (-j)^l \la_j =1, \,\,  0 \leq l \leq k$ (see Theorem 5.19 in \cite{AF}).  
Then $E f \in \dot W^{2 k, k}_p ({\mathbb R}^{4})$ with 
 \[\| E f\|_{\dot W^{2 k, k}_p ({\mathbb R}^{4})} \leq c \| f \|_{\dot {\mathbb W}^{2k, k}_p (\R \times (0, \infty))}.\]
 This implies that $ \dot {\mathbb W}^{2 k, k}_p (\R \times (0, \infty)) =  \dot W^{2 k, k}_p (\R \times (0, \infty))$ for $k \in \N_0 $.

Let $s > 0$ and $s \not\in \N$.
If $ f \in \dot {\mathbb W}^{s, \frac{s}2}_p (\R \times (0, \infty))$, then $ E f \in \dot W^{s , \frac{s}2}_p ({\mathbb R}^{4})$ by the property of the complex interpolation (see (1) of  Proposition \ref{P21}), and so $f = Ef|_{\R \times (0, \infty)} \in  \dot W^{s, \frac{s}2 }_p (\R \times (0, \infty))$. 
Hence 
\begin{align}
\label{E23}
 \dot {\mathbb W}^{s, \frac{s}2}_p (\R \times (0, \infty)) \subset \dot W^{s, \frac{s}2 }_p (\R \times (0, \infty)).
 \end{align}

Let $2k < s < 2k+2$ for $k \in \N_0$.
Then,  $E:  W^{2 i,i}_p (\R \times (0, \infty)) \ri   W^{2 i,i}_p ( {\mathbb R}^{4}), \,\, i = k, \, k+1$   are retraction with respect to coretraction $ R : W^{2i,i}_p ({\mathbb R}^{4}) \ri W^{2 i,i}_p (\R \times   (0, \infty)), \,\, RF = F|_{\R \times (0, \infty)}$.
Let $f \in \dot W^{s, \frac{s}2 }_p(\R \times (0, \infty))$. 
From the definition of $ \dot W^{s, \frac{s}2 }_p (\R \times (0, \infty))$ (see \eqref{1004-1}), there is $F \in \dot W^{s, \frac{s}2 }_p ({\mathbb R}^{4})$ such that $R F =f$ and $\| F \|_{\dot W^{s, \frac{s}2 }_p ({\mathbb R}^{4})} \lesssim \| f\|_{\dot W^{s, \frac{s}2 }_p (\R \times (0, \infty))} $.

\begin{align*}
\| b \|_{[B_0, B_1]_{\te}} = \| Ra \|_{[B_0, B_1]_{\te}} \approx \|  SRa \|_{[A_0, A_1]_{\te}} \leq c \|  Sb \|_{[A_0, A_1]_{\te}}.
\end{align*}

 From  Remark \ref{rem0920}, 
 \begin{align*}
\|f \|_{\dot {\mathbb W}^{s, \frac{s}2}_p (\R \times (0, \infty))}  = \|R F \|_{\dot {\mathbb W}^{s, \frac{s}2}_p (\R \times (0, \infty))} \approx \| SRF \|_{W^{s, \frac{s}2}_p ({\mathbb R}^4 )} \leq c \| F \|_{W^{s, \frac{s}2}_p ({\mathbb R}^4 )} \leq c \|f \|_{\dot W^{s, \frac{s}2}_p (\R \times (0, \infty))}.
\end{align*}
 This implies
\begin{align}
\label{E25}
\dot W^{s, \frac{s}2 }_p (\R \times (0, \infty)) \subset \dot {\mathbb W}^{s, \frac{s}2}_p (\R \times (0, \infty)).
\end{align}
From \eqref{E23} and \eqref{E25}, we have for $1 \le p \le \infty$ and $0 < s < \infty$,
\begin{align}
\label{E26}
\dot {\mathbb W}^{s, \frac{s}2}_p (\R \times (0, \infty)) =  \dot W^{s, \frac{s}2 }_p (\R \times (0, \infty)).
\end{align}
Similarly, we obtain that for $1 \le p, q \le \infty$ and $0 < s < \infty$,
\begin{align}
\label{E27}
\dot {\mathbb B}^{s, \frac{s}2}_{p,q} (\R \times (0, \infty)) =  \dot B^{s, \frac{s}2 }_{p,q} (\R \times (0, \infty)).
\end{align}
From now on, we use the notations $ L^p = L^p(\R \times (0, \infty))$,  $ \dot W^{s, \frac{s}2 }_{p} = \dot W^{s, \frac{s}2 }_{p} (\R \times (0, \infty)) $ and   $\dot B^{s,\frac{ s}2}_{p,q} =  \dot B^{s, \frac{s}2}_{p,q} (\R \times (0, \infty))$.
Note that
\begin{align*}
\| E f\|_{\dot W^{s, \frac{s}2 }_{p} ({\mathbb R}^{4})} \approx \| f\|_{\dot W^{s, \frac{s}2 }_{p}}, \quad
\| E f\|_{\dot B^{s, \frac{s}2}_{pr} ({\mathbb R}^{4})} \approx \| f\|_{\dot B^{s, \frac{s}2}_{pr}}.
\end{align*}

From Proposition \ref{P21}, we obtain the following results.
\begin{prop}
\label{P24} 
\begin{itemize}
\item[(1)]
For  $1 \leq  p,\, q \leq  \infty$,
\begin{align*}
( \dot W^{s_0, \frac{s_0}2}_p,   \dot W^{s_1, \frac{s_1}2}_p )_{\te, q} = \dot B^{s, \frac{s}2}_{p,q},  
\end{align*}
where $0 < \te<1$ and  $s = s_0  (1 -\te) + s_1 \te$. 

\item[(2)]
Let  $ 1 \leq p_0\leq p_1  \leq \infty, \,  1 \leq q_0\leq q_1  \leq \infty$ and $ s_0\geq s_1$ with $s_0 - \frac{5}{p_0} = s_1 - \frac{5}{p_1}$.
Then, the following inclusions hold
\begin{align*}
\dot  W^{s_0,  \frac{s_0}2  }_{p_0}  \subset   \dot W^{s_1, \frac{s_1}2  }_{p_1}, \quad
\dot  B^{ s_0, \frac{ s_0}2  }_{p_0, q_0} \subset   \dot B^{s_1, \frac{ s_1}2  }_{p_1 ,q_1} .
\end{align*}

\item[(3)]
For $0 < s$ and $ 1  < p  <  \infty$,
\begin{align*}
\dot W^{s,\frac{s}2}_{p}   &=L^p (0,\infty; \dot W^{ s}_{p} (\R)) \cap L^p (\R; \dot W^{\frac{s}2}_{p}(0, \infty)).
 \end{align*}

 \item[(4)]
If   $f \in   \dot B_{p,q}^{s, \frac{s}2} $ or $f \in   \dot W_{p}^{s, \frac{s}2}, \, s > \frac{2}p$, then $f|_{t =0 } \in  \dot  B_{p,q}^{s -\frac{2}p }(\R )$ with
\begin{align*}
\| f|_{t=0} \|_{ \dot B_{p,p}^{ s -\frac{2}p } (\R )} \leq c \| f \|_{\dot  W_{p}^{s, \frac{s}2}}, \quad
\| f|_{t=0} \|_{ \dot B_{p,q}^{ s -\frac{2}p } (\R )} \leq c \| f \|_{\dot  B_{p,q}^{s, \frac{s}2}}.
\end{align*}
\end{itemize}
\end{prop}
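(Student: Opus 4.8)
The common strategy I would follow is to deduce every assertion from the corresponding full-space statement in Proposition \ref{P21} by transporting it through the reflection extension $E$ and the restriction $R = (\,\cdot\,)|_{\R \times (0,\infty)}$. The facts I would rely on, all recorded in the discussion preceding the proposition, are that the restriction $R$ is a retraction with coretraction $E$, so that $RE = I$ on the half-space spaces, and that for $0 < s < \infty$ one has the norm equivalences $\|Ef\|_{\dot W^{s,\frac{s}2}_p({\mathbb R}^4)} \approx \|f\|_{\dot W^{s,\frac{s}2}_p}$ and $\|Ef\|_{\dot B^{s,\frac{s}2}_{p,q}({\mathbb R}^4)} \approx \|f\|_{\dot B^{s,\frac{s}2}_{p,q}}$. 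Throughout I fix the reflection order $k$ so that $2k$ is at least as large as the top smoothness index appearing in the statement at hand; since $E$ is bounded on $L^p$ and on $\dot W^{2k,k}_p$, real and complex interpolation make a single such $E$ bounded on all intermediate scales $\dot W^{\sigma,\frac{\sigma}2}_p$ and $\dot B^{\sigma,\frac{\sigma}2}_{p,q}$ with $0 \le \sigma \le 2k$ simultaneously, for every $p$ and $q$.

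For (1) I would apply the retraction--coretraction principle directly. Taking $A_i = \dot W^{s_i,\frac{s_i}2}_p({\mathbb R}^4)$ and $B_i = \dot W^{s_i,\frac{s_i}2}_p$ for $i = 0,1$, with retraction $R$ and coretraction $E$, Proposition \ref{P23} and Remark \ref{rem0920} give that $R$ maps $(A_0,A_1)_{\te,q}$ onto $(B_0,B_1)_{\te,q}$ and that $\|b\|_{(B_0,B_1)_{\te,q}} \approx \|Eb\|_{(A_0,A_1)_{\te,q}}$. By (1) of Proposition \ref{P21} one has $(A_0,A_1)_{\te,q} = \dot B^{s,\frac{s}2}_{p,q}({\mathbb R}^4)$, whose image under $R$ is exactly the restriction space $\dot B^{s,\frac{s}2}_{p,q}$; combined with the norm equivalence $\|Eb\|_{\dot B^{s,\frac{s}2}_{p,q}({\mathbb R}^4)} \approx \|b\|_{\dot B^{s,\frac{s}2}_{p,q}}$ this yields $(\dot W^{s_0,\frac{s_0}2}_p, \dot W^{s_1,\frac{s_1}2}_p)_{\te,q} = \dot B^{s,\frac{s}2}_{p,q}$ with equivalent norms.

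For (2) and (4) I would use the elementary extend--map--restrict scheme. Given $f$ in the smaller space, set $F = Ef$, so that $\|F\|$ on ${\mathbb R}^4$ is controlled by $\|f\|$ in the relevant norm; applying the full-space embedding (2) of Proposition \ref{P21} to $F$ and then restricting, $f = RF$ satisfies $\|f\|_{\dot W^{s_1,\frac{s_1}2}_{p_1}} \le \|F\|_{\dot W^{s_1,\frac{s_1}2}_{p_1}({\mathbb R}^4)} \lesssim \|F\|_{\dot W^{s_0,\frac{s_0}2}_{p_0}({\mathbb R}^4)} \lesssim \|f\|_{\dot W^{s_0,\frac{s_0}2}_{p_0}}$, and the same chain gives the Besov inclusion. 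For the trace statement (4), the half-space trace is defined by $f|_{t=0} := (Ef)|_{t=0}$, which is legitimate because $Ef = f$ for $t>0$ forces any two extensions to share the same boundary value; applying the full-space trace estimate (4) of Proposition \ref{P21} to $Ef$ and invoking $\|Ef\| \approx \|f\|$ then produces the two stated bounds.

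The genuinely delicate point, which I expect to be the main obstacle, is (3), because its mixed-norm description involves the half-line in the time variable. The inclusion from left to right is routine: extending $f$, using the full-space identity $\dot W^{s,\frac{s}2}_p({\mathbb R}^4) = L^p({\mathbb R};\dot W^s_p(\R)) \cap L^p(\R; \dot W^{\frac{s}2}_p({\mathbb R}))$ of Proposition \ref{P21}(3), and restricting each factor to $t>0$ gives $f \in L^p(0,\infty;\dot W^s_p(\R)) \cap L^p(\R;\dot W^{\frac{s}2}_p(0,\infty))$, the temporal factor being restricted fiberwise in $x$. The reverse inclusion is where care is needed: one must verify that the reflection $E$, which acts only in $t$, sends $L^p(0,\infty;\dot W^s_p(\R)) \cap L^p(\R;\dot W^{\frac{s}2}_p(0,\infty))$ back into $L^p({\mathbb R};\dot W^s_p(\R)) \cap L^p(\R;\dot W^{\frac{s}2}_p({\mathbb R}))$. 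Boundedness on the spatial factor is immediate, since $E$ commutes with the $x$-variable and is an $L^p$-bounded reflection in $t$ with values in the Banach space $\dot W^s_p(\R)$; boundedness on the temporal factor reduces, fiberwise in $x$, to the one-dimensional assertion that the reflection is bounded $\dot W^{\frac{s}2}_p(0,\infty) \to \dot W^{\frac{s}2}_p({\mathbb R})$, which holds once $k$ is chosen with $\frac{s}2 \le k$. Combining the two inclusions with the equivalence $\|Ef\| \approx \|f\|$ then gives the identity in (3), completing the argument.
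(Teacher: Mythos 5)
Your proposal is correct and follows essentially the same route as the paper, which gives no separate proof of this proposition but simply states that it follows from Proposition \ref{P21} via the extension/restriction machinery (the operators $E$ and $R$, the retraction--coretraction principle of Proposition \ref{P23} and Remark \ref{rem0920}, and the identifications \eqref{E26}--\eqref{E27}) developed immediately beforehand. Your write-up is in fact somewhat more explicit than the paper, particularly in spelling out the two inclusions needed for the mixed-norm identity in item (3).
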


The following H$\ddot{\rm o}$lder type inequality is a direct consequence of Lemma 2.2 in \cite{chae}.

\begin{lemm}
\label{L25}
If $0 < s < \infty$ and $1 \leq p \leq r_i, p_i \leq \infty$ with $\frac1{r_i} + \frac1{p_i} = \frac1p$, $i=1,2$, then
\begin{align*}
\| f_1f_2\|_{ \dot B^{s, \frac{s}2}_{p,q}   }  
\lesssim \| f_1\|_{   \dot B^{s, \frac{s}2}_{p_1,q }   } \| f_2\|_{  L^{r_1}  }   + \| f_1\|_{ L^{p_2 } } \|f_2\|_{   \dot B^{s, \frac{s}2}_{r_2 ,q} }.
\end{align*}
\end{lemm}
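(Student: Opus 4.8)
The plan is to prove this product estimate by the Bony paraproduct decomposition, adapted to the parabolic (anisotropic) Littlewood--Paley structure underlying $\dot B^{s,\frac s2}_{p,q}$. Throughout I write $\Delta_j f := f*\phi_j$ for the frequency projections of the excerpt and $S_{j-1}f := \sum_{k\le j-1}\Delta_k f$ for the low-frequency cutoffs, so that $\widehat{\phi_j}$ is supported in the parabolic annulus $\{\,2^{j-1}<|\xi|+|\tau|^{1/2}<2^{j+1}\,\}$ and $\widehat{S_{j-1}}$ in the corresponding parabolic ball. The structural facts I rely on are that each $\Delta_j$ and $S_{j-1}$ is a convolution against an $L^1$ kernel of unit parabolic mass, hence uniformly bounded on every $L^r(\R\times(0,\infty))$, $1\le r\le\infty$, and that $\|f\|_{\dot B^{s,\frac s2}_{p,q}}=\big\|\,(2^{js}\|\Delta_j f\|_{L^p})_{j\in\mathbb Z}\,\big\|_{\ell^q}$. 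It suffices to prove the inequality on the full space $\RN$ and then restrict: if $F_i$ extends $f_i$ then $F_1F_2$ extends $f_1f_2$, and the restriction norms are infima, so the half-space estimate follows from the full-space one.

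First I would split the product via Bony's decomposition into three pieces,
\[
f_1 f_2 = \sum_j S_{j-1}f_1\,\Delta_j f_2 + \sum_j S_{j-1}f_2\,\Delta_j f_1 + \sum_{|i-j|\le 1}\Delta_i f_1\,\Delta_j f_2 =: T_{f_1}f_2 + T_{f_2}f_1 + R(f_1,f_2),
\]
and record the frequency localization of each summand. Because the Fourier support of $S_{j-1}f_2\,\Delta_j f_1$ sits in a parabolic annulus of size $2^j$, only $\Delta_k$ with $|k-j|\le N$ (for a fixed $N$ depending on $\phi$) acts nontrivially on it; for the remainder, $\Delta_k(\Delta_i f_1\,\Delta_j f_2)$ vanishes unless $i\approx j\gtrsim k$.

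For the two paraproduct terms I would argue symmetrically. Consider $T_{f_2}f_1$: applying $\Delta_k$ and using the band localization together with Hölder's inequality for $\tfrac1{r_1}+\tfrac1{p_1}=\tfrac1p$ and the uniform $L^{r_1}$-bound on $S_{j-1}$,
\[
\|\Delta_k(T_{f_2}f_1)\|_{L^p}\lesssim \sum_{|j-k|\le N}\|S_{j-1}f_2\|_{L^{r_1}}\|\Delta_j f_1\|_{L^{p_1}}\lesssim \|f_2\|_{L^{r_1}}\sum_{|j-k|\le N}\|\Delta_j f_1\|_{L^{p_1}}.
\]
Multiplying by $2^{ks}$ and taking the $\ell^q$ norm, the finite band $|j-k|\le N$ together with the discrete Young inequality yields $\|T_{f_2}f_1\|_{\dot B^{s,\frac s2}_{p,q}}\lesssim \|f_1\|_{\dot B^{s,\frac s2}_{p_1,q}}\|f_2\|_{L^{r_1}}$, the first term on the right. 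The mirror-image estimate for $T_{f_1}f_2$, using $\tfrac1{p_2}+\tfrac1{r_2}=\tfrac1p$, produces the second term $\|f_1\|_{L^{p_2}}\|f_2\|_{\dot B^{s,\frac s2}_{r_2,q}}$.

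The last and only genuinely delicate piece is the high--high remainder $R(f_1,f_2)$, and this is where I expect the main obstacle to lie. Here $\Delta_kR$ picks up contributions only from $j\ge k-N$, so with Hölder,
\[
2^{ks}\|\Delta_k R(f_1,f_2)\|_{L^p}\lesssim \sum_{j\ge k-N}2^{(k-j)s}\,\big(2^{js}\|\Delta_j f_1\|_{L^{p_1}}\big)\,\|\Delta_j f_2\|_{L^{r_1}}.
\]
The hypothesis $s>0$ is essential precisely here: the kernel $m\mapsto 2^{-ms}\,\mathbf 1_{\{m\ge -N\}}$ lies in $\ell^1$, so the discrete Young inequality converts the sum into $\|f_1\|_{\dot B^{s,\frac s2}_{p_1,q}}\|f_2\|_{L^{r_1}}$ after bounding $\|\Delta_j f_2\|_{L^{r_1}}\lesssim\|f_2\|_{L^{r_1}}$ uniformly; for $s\le 0$ the factor $2^{(k-j)s}$ would fail to be summable as $j\to\infty$, which is exactly why the statement is restricted to $0<s<\infty$. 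Absorbing the remainder into the first term and summing the three contributions gives the claimed inequality. Since every step uses only $L^r$-boundedness of the convolution projections, Hölder's inequality, and the band structure of the parabolic Littlewood--Paley pieces, the argument transfers essentially verbatim from the isotropic product estimate (Lemma 2.2 in \cite{chae}) to the anisotropic spaces considered here.
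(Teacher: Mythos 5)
Your argument is correct in substance but takes a genuinely different route from the paper. The paper's proof of Lemma \ref{L25} is two lines: it applies the common extension operator $E$ to both factors, invokes the already-known product estimate (Lemma 2.2 of \cite{chae}) on the full space ${\mathbb R}^4$, and then uses that $E$ is simultaneously bounded on all the spaces appearing on the right-hand side. You instead reprove the full-space estimate from scratch by running Bony's decomposition with the parabolic Littlewood--Paley blocks; your treatment of the two paraproducts and of the high--high remainder (including the observation that $s>0$ is exactly what makes the kernel $2^{-ms}\mathbf 1_{\{m\ge -N\}}$ summable) is the standard and correct argument, and it has the merit of making the anisotropic adaptation explicit rather than asserting that Chae's isotropic lemma ``transfers verbatim.''

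The one step you should tighten is the reduction to the full space. You write that ``the restriction norms are infima, so the half-space estimate follows from the full-space one,'' but the right-hand side is a sum of \emph{products} of norms in different spaces: you need a single extension $F_1$ of $f_1$ that is near-optimal in $\dot B^{s,\frac s2}_{p_1,q}$ \emph{and} in $L^{p_2}$ at the same time (and likewise for $F_2$ in $L^{r_1}$ and $\dot B^{s,\frac s2}_{r_2,q}$). Choosing extensions that nearly realize each infimum separately gives you no control on the other norm, so the infimum property alone does not close the argument. The fix is exactly what the paper does: take $F_i=Ef_i$ with the explicit reflection-type extension operator $E$ of Section \ref{S2}, which is bounded on every $L^r$ and on every $\dot B^{s,\frac s2}_{p,q}$ simultaneously; with that substitution your proof is complete.
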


\begin{proof}
Let $E$ be the extension operator.
From  Lemma 2.2 in \cite{chae}, we get
\begin{align*}
\notag \| f_1f_2\|_{\dot B^{s, \frac{s}2}_{p,q}  } & \leq  \| E( f_1) E( f_2 )\|_{  \dot B^{s, \frac{s}2}_{p,q} ( {\mathbb R}^{4}   ) }\\
\notag 
&\lesssim \| E f_1\|_{   \dot B^{s, \frac{s}2}_{p_1 ,q} ({\mathbb R}^{4}  ) } \| E f_2\|_{  L^{r_1} ({\mathbb R}^{4}  )}   + \|E f_1\|_{ L^{p_2 }({\mathbb R}^{4}  )} \| E f_2\|_{   \dot B^{s, \frac{s}2}_{r_2, q}({\mathbb R}^{4}   )} \\
&\lesssim \|   f_1\|_{   \dot B^{s, \frac{s}2}_{p_1,q }   } \|   f_2\|_{  L^{r_1}  }   + \|   f_1\|_{ L^{p_2 } } \|   f_2\|_{   \dot B^{s, \frac{s}2}_{r_2,q } }.
\end{align*}
\end{proof}


\begin{lemm}
\label{L26}
Let $ 1 \leq  q \leq p  \leq   \infty$.
\begin{itemize}
\item[(1)]
If $1 \le p < \infty$ and $1 \le r \le \infty$, then $\dot B^{\frac{5}q,\frac{5}{2q }}_{q,r}   \subset L^{p,r} $ with
\begin{align*}
\| f \|_{L^{p,r}  } \lesssim \| f \|_{\dot B^{\frac{5}q -\frac{5}p,\frac{5}{2q }- \frac{5}{2p  }}_{q,r}  },
\end{align*}
where $L^{p,r} $ is Lorentz space on $\R  \times (0, \infty )$. \\
In particular, if $p =r$, then
\begin{align*}
\| f \|_{L^{p}  } \lesssim \| f \|_{\dot B^{\frac{5}q -\frac{5}p,\frac{5}{2q }- \frac{5}{2p }}_{q,p}  }.
\end{align*}
\item[(2)]
If $p = \infty$, then $\dot B^{\frac{5}q,\frac{5}{2q }}_{q, 1}   \subset L^\infty $ with
\begin{align*}
\| f \|_{L^\infty  } \lesssim \| f \|_{\dot B^{\frac{5}q,\frac{5}{2q }}_{q,1}  }.
\end{align*}
\end{itemize}
\end{lemm}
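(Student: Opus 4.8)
The plan is to reduce both parts to a single Bernstein-type estimate in the parabolic (anisotropic) scaling, and then recover the Lorentz refinement of part (1) by real interpolation in the integrability exponent. Here $5 = 3+2$ is the homogeneous dimension of $\R \times (0,\infty)$ under the parabolic dilation $(x,t)\mapsto(\lambda x,\lambda^2 t)$, which is exactly the scaling respected by the Littlewood--Paley pieces $\phi_j$. The central estimate I would establish is
\begin{align}\label{star}
\|f\|_{L^p}\lesssim \|f\|_{\dot B^{\frac5q-\frac5p,\frac5{2q}-\frac5{2p}}_{q,1}},\qquad 1\le q\le p\le\infty.
\end{align}
Part (2) is precisely \eqref{star} at $p=\infty$, and the endpoints needed for part (1) are the instances of \eqref{star} with finite $p$.

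To prove \eqref{star}, I would first record the anisotropic Bernstein inequality. Since $\widehat{\phi_j}(\xi,\tau)=\widehat{\phi}(2^{-j}\xi,2^{-2j}\tau)$ is supported in the parabolic annulus $2^{j-1}<|\xi|+|\tau|^{\frac12}<2^{j+1}$, the kernel scales as $\phi_j(x,t)=2^{5j}\phi(2^jx,2^{2j}t)$, so $\|\phi_j\|_{L^m}=2^{5j(1-\frac1m)}\|\phi\|_{L^m}$. Choosing a fattened multiplier $\widetilde{\phi}$ equal to one on that annulus and writing $g=g*\widetilde{\phi}_j$ for any $g$ with spectrum in the annulus, Young's inequality with $\frac1m=1-(\frac1q-\frac1p)$ gives
\begin{align*}
\|g\|_{L^p}\le \|\widetilde{\phi}_j\|_{L^m}\|g\|_{L^q}\lesssim 2^{5j(\frac1q-\frac1p)}\|g\|_{L^q}.
\end{align*}
Applying this to $g=f*\phi_j$ and summing the Littlewood--Paley decomposition $f=\sum_j f*\phi_j$ by the triangle inequality yields
\begin{align*}
\|f\|_{L^p}\le\sum_{j}\|f*\phi_j\|_{L^p}\lesssim \sum_j 2^{j(\frac5q-\frac5p)}\|f*\phi_j\|_{L^q}=\|f\|_{\dot B^{\frac5q-\frac5p,\frac5{2q}-\frac5{2p}}_{q,1}},
\end{align*}
which is \eqref{star}, and hence part (2) on setting $p=\infty$. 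I would carry out the summation after extending $f$ by $E$ and working in the corresponding space on ${\mathbb R}^4$, where the homogeneous decomposition converges in the appropriate (modulo polynomials) sense.

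For part (1) with $q<p<\infty$ I would upgrade the target from $L^p$ to $L^{p,r}$ by real interpolation. Fix $q\le p_0<p<p_1<\infty$ and $\theta\in(0,1)$ with $\frac1p=\frac{1-\theta}{p_0}+\frac{\theta}{p_1}$, and set $s_i=\frac5q-\frac5{p_i}$, so that $(1-\theta)s_0+\theta s_1=\frac5q-\frac5p=:s$. Applying $(\cdot,\cdot)_{\theta,r}$ to the two bounded inclusions $\dot B^{s_i,\frac{s_i}2}_{q,1}\subset L^{p_i}$ coming from \eqref{star}, together with the standard identities
\begin{align*}
(\dot B^{s_0,\frac{s_0}2}_{q,1},\dot B^{s_1,\frac{s_1}2}_{q,1})_{\theta,r}=\dot B^{s,\frac{s}2}_{q,r},\qquad (L^{p_0},L^{p_1})_{\theta,r}=L^{p,r}
\end{align*}
(the first because $s_0\ne s_1$ forces the resulting third index to be $r$; both are available in \cite{BL,amman-anisotropic}), gives $\dot B^{s,\frac s2}_{q,r}\subset L^{p,r}$, which is part (1). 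Taking $r=p$ and $L^{p,p}=L^p$ recovers the stated special case. The genuinely substantial situation is $q<p$, on which I focus; the degenerate value $p=q$ carries no gain of integrability and is not needed in the sequel.

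The main obstacle is the anisotropic Bernstein inequality, i.e.\ pinning down the homogeneous dimension $5=3+2$ from the parabolic scaling of $\phi_j$ and propagating it cleanly through Young's inequality; once that exponent is fixed, part (2) is immediate and part (1) is a formal application of the two interpolation identities above. A secondary technical point, which I would handle exactly as in the passage establishing \eqref{E26}--\eqref{E27}, is to justify the Littlewood--Paley summation for the \emph{homogeneous} spaces on $\R\times(0,\infty)$ through the extension $E$, so that \eqref{star} is meaningful modulo polynomials.
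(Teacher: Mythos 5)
Your proposal is correct, and part (2) is essentially identical to the paper's argument: the paper also takes $\Phi=\phi_{-1}+\phi_0+\phi_1$, notes $\phi_j=\phi_j*\Phi_j$ with $\|\Phi_j\|_{L^m}\approx 2^{5j(1-\frac1m)}$, applies Young's inequality to get $\|f*\phi_j\|_{L^p}\lesssim 2^{5j(\frac1q-\frac1p)}\|f*\phi_j\|_{L^q}$, and sums over $j$. For part (1) you take a slightly different route. The paper interpolates the pair of inclusions $\dot W^{2,1}_{q}\subset L^{p_2}$ (the anisotropic Sobolev embedding, which it has already recorded as Proposition \ref{P24}(2)) and $L^q\subset L^q$, using the identity $(\dot W^{2,1}_q,L^q)_{\theta,r}=\dot B^{2(1-\theta),1-\theta}_{q,r}$, which is exactly the Sobolev--Besov interpolation statement it has set up in Proposition \ref{P24}(1); this keeps the proof entirely within facts already stated in Section \ref{S2}. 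You instead interpolate between two Besov-to-Lebesgue inclusions $\dot B^{s_i,\frac{s_i}2}_{q,1}\subset L^{p_i}$, both of which come for free from your $\ell^1$-summed Bernstein estimate, and then invoke $(\dot B^{s_0,\frac{s_0}2}_{q,1},\dot B^{s_1,\frac{s_1}2}_{q,1})_{\theta,r}=\dot B^{s,\frac s2}_{q,r}$ for $s_0\neq s_1$. That identity is standard (it is the content of Theorem 6.4.5 in the cited interpolation literature, where distinct smoothness indices collapse the original third indices), but it is not among the interpolation facts the paper explicitly records, so you are implicitly importing one more external result; in exchange your argument is more self-contained in that it never uses the Sobolev embedding $\dot W^{2,1}_q\subset L^{p_2}$. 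Both routes require the same retraction/extension bookkeeping to transfer the interpolation identities from ${\mathbb R}^4$ to $\R\times(0,\infty)$, which you correctly flag. No gaps.
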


\begin{proof}
See Appendix \ref{AA}.
\end{proof}

From Proposion \ref{P24} ,  Lemma \ref{L25} and Lemma \ref{L26}, we obtain the following lemma.

\begin{lemm}
\label{L27}
Let $ 0 < \be\leq \ga < \infty$,  $1 \leq  p_1, \, q\leq \infty$ and  $p_1<   p_2, \, p_3 \leq \infty$ such that $ \frac1{p_1} = \frac1{p_2} + \frac1{p_3}$
and   $\be -\frac{5}{p_2} = \ga - \frac{5}{p_1}$. 
If $ p_1 < r_1 < \infty$, then 
\begin{align}
\label{E28}
\begin{split}
\| f g\|_{\dot B^{\be, \frac{\be}2}_{p_1, q}   }  
&\lesssim \| f \|_{L^{p_3} }\|  g \|_{ \dot B^{\be, \frac{\be}2}_{p_2, q }   } + \| f \|_{ \dot B^{\be, \frac{\be}2}_{p_2, q }   } \|  g \|_{ L^{p_3}   }\\
&\lesssim \| f \|_{\dot B^{\frac{5}{p_1} -\frac{5}{p_3}, \frac{5}{2p_1} -\frac{5}{2p_3}}_{p_1, p_3} }  \| g \|_{\dot B^{\ga,\frac{\ga}2}_{p_1, q}  } +  \| f\|_{\dot B^{\ga,\frac{\ga}2}_{p_1, q}  }  \| g\|_{\dot B^{\frac{5}{p_1} -\frac{5}{p_3}, \frac{5}{2p_1} -\frac{5}{2p_3}}_{p_1, p_3} }.
\end{split}
\end{align}
If $ p_3 =\infty $, then
\begin{align}
\label{E29}
\begin{split}
\| f g\|_{\dot B^{\be, \frac{\be}2}_{p_1, q}   }  
&\lesssim \| f \|_{L^{\infty } }\|  g \|_{ \dot B^{\be, \frac{\be}2}_{p_1, q }   } + \| f \|_{ \dot B^{\be, \frac{\be}2}_{p_1, q }   } \|  g \|_{ L^{\infty}   }\\
&\lesssim \| f \|_{\dot B^{\frac{5}{p_1}, \frac{5}{2p_1}}_{p_1, 1} }  \| g \|_{\dot B^{\be,\frac{\be}2}_{p_1, q}  } +  \| g\|_{\dot B^{\frac{5}{p_1}, \frac{5}{2 p_1} }_{p_1, 1} }  \| f\|_{\dot B^{\be,\frac{\be}2}_{p_1, q}  }.
 \end{split}
\end{align}
\end{lemm}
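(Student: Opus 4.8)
The plan is to read off both displayed chains \eqref{E28} and \eqref{E29} from the cited results with essentially no new analysis: the first line of each is the Hölder-type product estimate Lemma \ref{L25} for a suitable choice of exponents, and the second line is obtained by upgrading the $L^{p_3}$ factor and the lower-order Besov factor through the embeddings Lemma \ref{L26} and Proposition \ref{P24} (2). Concretely, for the \emph{first inequality} I apply Lemma \ref{L25} with base integrability $p = p_1$, $f_1 = f$, $f_2 = g$, and the splitting $\frac1{p_1} = \frac1{p_2} + \frac1{p_3}$: in one term I pair the factor $\dot B^{\be, \be/2}_{p_2, q}$ with the factor $L^{p_3}$ (taking the Lemma \ref{L25} exponent $r = p_3$, so that the compatibility relation $\frac1r + \frac1{p_2} = \frac1{p_1}$ is exactly $\frac1{p_2} + \frac1{p_3} = \frac1{p_1}$), and symmetrically in the other term with the roles of $f$ and $g$ interchanged. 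In case \eqref{E29} one has $p_3 = \infty$, which forces $p_2 = p_1$, and one simply takes the $L^\infty$ endpoint.

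For the \emph{second inequality when $p_3 < \infty$} I estimate the two factors separately. The $L^{p_3}$ factor is handled by Lemma \ref{L26} (1) in its ``$p = r$'' form with source index $p_1$ and target $p_3$; this is admissible because $\frac1{p_1} = \frac1{p_2} + \frac1{p_3} \ge \frac1{p_3}$ forces $p_1 \le p_3 < \infty$, and it yields $\| f\|_{L^{p_3}} \lesssim \| f\|_{\dot B^{\frac5{p_1} - \frac5{p_3}, \frac5{2p_1} - \frac5{2p_3}}_{p_1, p_3}}$. The Besov factor is handled by Proposition \ref{P24} (2) with source $(\ga, p_1, q)$ and target $(\be, p_2, q)$, whose hypotheses $p_1 \le p_2$, $\ga \ge \be$, and $\ga - \frac5{p_1} = \be - \frac5{p_2}$ are exactly the standing assumptions, giving $\| g\|_{\dot B^{\be, \be/2}_{p_2, q}} \lesssim \| g\|_{\dot B^{\ga, \ga/2}_{p_1, q}}$. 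Substituting both bounds (and their $f \leftrightarrow g$ analogues) into the first line produces \eqref{E28}.

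For the \emph{second inequality when $p_3 = \infty$} we have $p_2 = p_1$ and hence $\be = \ga$, so no Besov embedding is needed; I only replace each $L^\infty$ norm using the endpoint embedding Lemma \ref{L26} (2) with source index $p_1$, i.e. $\| f\|_{L^\infty} \lesssim \| f\|_{\dot B^{\frac5{p_1}, \frac5{2p_1}}_{p_1, 1}}$ and likewise for $g$, turning the first line of \eqref{E29} into its second line. I do not expect any genuine obstacle here, since the three cited statements already contain all the work; the only point requiring care is verifying that the index constraints of Lemma \ref{L26} (1) (namely $p_1 \le p_3$) and of Proposition \ref{P24} (2) (the ordering together with the scaling equality) are met under the relations $\frac1{p_1} = \frac1{p_2} + \frac1{p_3}$ and $\be - \frac5{p_2} = \ga - \frac5{p_1}$, and noting that the finiteness of $p_3$ in the first case is precisely what permits the embedding into $L^{p_3}$, its failure being covered by the $L^\infty$ endpoint instead.
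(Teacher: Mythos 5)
Your proposal is correct and coincides with the paper's own (very terse) argument: the paper simply states that Lemma \ref{L27} follows from Proposition \ref{P24}, Lemma \ref{L25} and Lemma \ref{L26}, and your write-up supplies exactly the intended combination — Lemma \ref{L25} with the splitting $\frac1{p_1}=\frac1{p_2}+\frac1{p_3}$ for the first lines, then Lemma \ref{L26} for the $L^{p_3}$ (resp.\ $L^\infty$) factors and Proposition \ref{P24}(2) for the Besov factors, with the index checks verified. No discrepancy to report.
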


\begin{lemm}
\label{L28}
If $ 1 < p < \infty$, $1 \leq q \leq \infty$, and $ s > \frac2p$, then 
\begin{align*}
\dot W^{ s,\frac{s}2}_{p}  \subset  C_b ([0, \infty); \dot  B^{s -\frac{2}p}_{p,p} (\R)), \quad
\dot B^{s, \frac{s}2}_{p,q}  \subset  C_b ([0, \infty); \dot  B^{s -\frac{2}p}_{p,q} (\R)).
\end{align*}
\end{lemm}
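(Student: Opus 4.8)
The plan is to reduce everything to the whole space ${\mathbb R}^4$, where the parabolic norms are invariant under translation in the time variable, and then to promote the single-time trace estimate of Proposition \ref{P24}(4) to an estimate that is uniform in $t$ and, separately, to continuity in $t$. I will carry out the argument for $\dot W^{s,\frac s2}_p$; the Besov case is identical with Proposition \ref{P21}(4) in place of its Sobolev counterpart.

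First I would pass to the full space. Given $f \in \dot W^{s,\frac s2}_p$ (resp. $f\in\dot B^{s,\frac s2}_{p,q}$), let $F = Ef$ be the extension to ${\mathbb R}^4$ constructed before \eqref{E23}; by \eqref{E26} (resp. \eqref{E27}) and the norm equivalence recorded after \eqref{E27} we have $F\in\dot W^{s,\frac s2}_p({\mathbb R}^4)$ with $\|F\|_{\dot W^{s,\frac s2}_p({\mathbb R}^4)}\approx\|f\|_{\dot W^{s,\frac s2}_p}$ and $F|_{\R\times(0,\infty)}=f$. It therefore suffices to prove $\dot W^{s,\frac s2}_p({\mathbb R}^4)\subset C_b(\bR;\dot B^{s-\frac2p}_{p,p}(\R))$ and $\dot B^{s,\frac s2}_{p,q}({\mathbb R}^4)\subset C_b(\bR;\dot B^{s-\frac2p}_{p,q}(\R))$, and then restrict from $\bR$ to $[0,\infty)$.

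For the boundedness I would use time translation. Write $\tau_a F(x,t)=F(x,t+a)$. The kernel $h_{-s}$ defining the $\dot W^{s,\frac s2}_p({\mathbb R}^4)$ norm and the Littlewood--Paley pieces $\phi_j$ both commute with $\tau_a$, so the full-space parabolic norms are $\tau_a$-invariant: $\|\tau_a F\|=\|F\|$. The trace of $\tau_a F$ at time $0$ is $F(\cdot,a)$, which for $a\ge 0$ equals $f(\cdot,a)$. Applying the single-time trace bound of Proposition \ref{P21}(4) to $\tau_a F$ gives, for every $a\in\bR$,
\begin{align*}
\|F(\cdot,a)\|_{\dot B^{s-\frac2p}_{p,p}(\R)}\lesssim\|\tau_a F\|_{\dot W^{s,\frac s2}_p({\mathbb R}^4)}=\|F\|_{\dot W^{s,\frac s2}_p({\mathbb R}^4)}\lesssim\|f\|_{\dot W^{s,\frac s2}_p},
\end{align*}
and likewise for the Besov pair. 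Taking the supremum over $a\ge0$ yields the bounded part of $C_b$. For continuity I would apply the same trace bound to the difference $\tau_a F-\tau_{a'}F=\tau_{a'}(\tau_{a-a'}F-F)$, obtaining
\begin{align*}
\|F(\cdot,a)-F(\cdot,a')\|_{\dot B^{s-\frac2p}_{p,q}(\R)}\lesssim\|\tau_{a-a'}F-F\|_{\dot B^{s,\frac s2}_{p,q}({\mathbb R}^4)}.
\end{align*}
The right-hand side tends to $0$ as $a'\to a$ precisely because $\{\tau_h\}$ is strongly continuous on the full-space parabolic space: for $\dot W^{s,\frac s2}_p({\mathbb R}^4)$ this reduces, via the translation-invariant multiplier $h_{-s}$, to the classical continuity of translation on $L^p({\mathbb R}^4)$ for $1\le p<\infty$, and for $\dot B^{s,\frac s2}_{p,q}({\mathbb R}^4)$ with $q<\infty$ the same reduction works block by block.

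The main obstacle is the endpoint $q=\infty$, where $\{\tau_h\}$ is \emph{not} strongly continuous on $\dot B^{s,\frac s2}_{p,\infty}({\mathbb R}^4)$ (equivalently, the parabolically band-limited functions are not dense), so the difference estimate above has a right-hand side that need not vanish, and continuity of $a\mapsto F(\cdot,a)$ cannot be read off from it. Here I would argue directly, controlling the temporal modulus of continuity of the target norm itself rather than the parabolic norm of the time-increment: decomposing $F=\sum_k F*\phi_k$ into parabolic frequency blocks localized at $|\xi|+|\tau|^{\frac12}\sim 2^k$, each block is smooth in $t$ with $\|F(\cdot,a)*\phi_k-F(\cdot,a')*\phi_k\|_{L^p(\R)}\lesssim\min(1,2^{2k}|a-a'|)\,2^{-sk}\|F\|_{\dot B^{s,\frac s2}_{p,\infty}({\mathbb R}^4)}$, and summing against the gain $s-\frac2p>0$ after splitting at the scale $2^{2k}|a-a'|\approx1$ produces an $o(1)$ modulus of continuity into $\dot B^{s-\frac2p}_{p,\infty}(\R)$. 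This quantitative trace estimate — which is exactly the sharp content of the trace theorem in \cite{amman-anisotropic} — is the crux of the endpoint case. Combining boundedness with continuity, and restricting from $\bR$ to $[0,\infty)$, completes the proof.
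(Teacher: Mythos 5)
Your reduction to ${\mathbb R}^4$, the translation\-/invariance argument for the uniform bound, and the continuity argument via strong continuity of time\-/translations are correct for $\dot W^{s,\frac s2}_{p}$ and for $\dot B^{s,\frac s2}_{p,q}$ with $q<\infty$, and this is a genuinely different route from the paper's. The paper instead decomposes $f=(f-\Ga f_0)+\Ga f_0$ with $f_0=f(\cdot,0)$, shows the first piece vanishes at $t=0$ because its restriction norm on $\R\times(0,t)$ tends to $0$, uses $e^{t\De}f_0\to f_0$ for the second, and then must treat the ranges $2k+\frac2p<s<2k+2$ (by passing to $D_x^{2k}f$) and $2k\le s\le 2k+\frac2p$ (by interpolation) separately. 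Your argument dispenses with the heat semigroup and with the case analysis in $s$; what it uses instead is exactly the strong continuity of $\tau_h$, which is available because $h_{-s}*F\in L^p({\mathbb R}^4)$ with $p<\infty$ in the Sobolev case and because smooth functions are dense when $q<\infty$ in the Besov case.

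The endpoint $q=\infty$ is where your proposal has a genuine gap, and it cannot be repaired. First, a minor point: your block estimate should carry a Bernstein factor $2^{2k/p}$ (passing from $L^p({\mathbb R}^4)$ to $L^p_x(\R)$ at fixed time costs $2^{2k/p}$ since $|\tau|\lesssim 2^{2k}$ on the support of $\widehat{\phi_k}$), so the correct bound is $\min(1,2^{2k}|a-a'|)\,2^{-(s-\frac2p)k}\|F\|$. The real problem is the summation: the support of $\widehat{\phi_k}$ contains all spatial frequencies $|\xi|\le 2^{k+1}$, so to compute the $\dot B^{s-\frac2p}_{p,\infty}(\R)$ norm of the increment you must apply a spatial Littlewood--Paley block $\De_l$ and sum over the parabolic blocks $k\ge l-2$, which yields $\sup_{l}\sum_{m\ge-2}2^{-(s-\frac2p)m}\min\bigl(1,2^{2(m+l)}|a-a'|\bigr)$; for fixed $|a-a'|>0$ and $l\to\infty$ the minimum saturates at $1$ and the sup is bounded below by a constant, so no $o(1)$ modulus results. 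Indeed the continuity assertion is false at $q=\infty$: take $g\in\dot B^{s-\frac2p}_{p,\infty}(\R)$ outside the closure of the Schwartz class and $F=\Ga g$; by Proposition \ref{P31}, $F\in\dot B^{s,\frac s2}_{p,\infty}$ with trace $g$, yet $e^{t\De}g\not\to g$ in $\dot B^{s-\frac2p}_{p,\infty}(\R)$. Only boundedness (and weak-$*$ continuity) survives at this endpoint. Note that the paper's own proof is silently restricted to $q<\infty$ at the very same place, namely the claim $\|F(t)-f_0\|_{\dot B^{\al-\frac2p}_{p,q}(\R)}\to0$.
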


\begin{proof}
See Appendix \ref{AB}.
\end{proof}

\begin{lemm}
\label{L29}
If $k \in \N_0$ and $\be \in \N_0^3 $ such that  $ s > k + |\be|$, then for $1 \leq p, \, q \leq \infty$
\begin{equation}
\label{E210}
\| D_t^k D_x^{ |\be|} f\|_{\dot B^{s -|\be| -2k, \frac12 (s -\be -2k)}_{p,q}  ( {\mathbb R}^{4} )} \lesssim \| f\|_{\dot B^{ s, \frac{s}2 }_{p,q} ( {\mathbb R}^{4}) }
\end{equation}
and for $1 < p < \infty$
\begin{equation}
\label{E211}
\| D^k_t D_x^{|\be|} f\|_{\dot W^{s - \be -2k,\frac12 ( s -\be -2k)}_{p} ({\mathbb R}^{4})} \lesssim \| f\|_{\dot W^{s, \frac{s}2 }_{p} ({\mathbb R}^{4})}.
\end{equation}
\end{lemm}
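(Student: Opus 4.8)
The plan is to treat the two estimates separately by frequency-space methods, exploiting the fact that each spatial derivative carries parabolic weight $1$ and each time derivative parabolic weight $2$. For the Besov bound \eqref{E210} I would argue directly from the Littlewood--Paley definition together with an anisotropic Bernstein inequality; for the Sobolev bound \eqref{E211} I would reduce matters to the $L^p$-boundedness of a single parabolically homogeneous Fourier multiplier and invoke the multiplier theorem already cited in Section \ref{S2}. Throughout write $s' := s - |\be| - 2k$, so that the target indices are $\big(s', \frac{s'}2\big)$.

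For \eqref{E210}, note that convolution commutes with $D_t^k D_x^\be$, so $(D_t^k D_x^\be f) * \phi_j = D_t^k D_x^\be (f * \phi_j)$. Since $\widehat{\phi_j}$ is supported in the anisotropic annulus $2^{j-1} < |\xi| + |\tau|^{\frac12} < 2^{j+1}$, the piece $f * \phi_j$ is frequency-localized there, and I would establish the anisotropic Bernstein inequality
\begin{align*}
\| D_t^k D_x^\be (f*\phi_j)\|_{L^p({\mathbb R}^4)} \lesssim 2^{(|\be| + 2k) j}\| f*\phi_j\|_{L^p({\mathbb R}^4)}.
\end{align*}
This follows by fixing a Schwartz bump $\widetilde\phi$ with $\widehat{\widetilde\phi} = 1$ on the support of $\widehat\phi$, writing $f*\phi_j = \widetilde\phi_j * (f*\phi_j)$ with $\widetilde\phi_j(x,t) = 2^{5j}\widetilde\phi(2^j x, 2^{2j} t)$ (so that $\widehat{\widetilde\phi_j} = 1$ on $\mathrm{supp}\,\widehat{\phi_j}$), and applying Young's inequality: the parabolic scaling gives $\| D_t^k D_x^\be \widetilde\phi_j\|_{L^1({\mathbb R}^4)} = 2^{(|\be| + 2k)j}\| D_t^k D_x^\be \widetilde\phi\|_{L^1({\mathbb R}^4)}$, the Jacobian $2^{5j}$ of the dilation cancelling against the $L^1$ normalization. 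Multiplying the Bernstein bound by $2^{s'j}$ turns the right-hand weight into $2^{sj}$, and taking the $\ell^q$ norm in $j$ yields $\| D_t^k D_x^\be f\|_{\dot B^{s', \frac{s'}2}_{p,q}} \lesssim \| f\|_{\dot B^{s, \frac{s}2}_{p,q}}$, which is \eqref{E210}. This argument is valid for all $1 \le p,q \le \infty$.

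For \eqref{E211} I would use the defining identity $\| g\|_{\dot W^{\sigma, \frac{\sigma}2}_p} = \| h_{-\sigma} * g\|_{L^p}$. Setting $g := h_{-s} * f$, so that $\| g\|_{L^p} = \| f\|_{\dot W^{s,\frac{s}2}_p}$ and $f = h_s * g$, the Fourier transform of $h_{-s'} * D_t^k D_x^\be f$ equals, up to a constant, $m(\xi,\tau)\,\widehat g(\xi,\tau)$ with
\begin{align*}
m(\xi,\tau) = (4\pi^2 |\xi|^2 + i\tau)^{-\frac{|\be| + 2k}2}(i\tau)^k (i\xi)^\be .
\end{align*}
The symbol $m$ is homogeneous of degree $0$ under the parabolic dilation $(\xi,\tau) \mapsto (\lambda \xi, \lambda^2 \tau)$, since its three factors scale by $\lambda^{-|\be| - 2k}$, $\lambda^{2k}$ and $\lambda^{|\be|}$. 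Moreover $4\pi^2|\xi|^2 + i\tau$ has nonnegative real part and vanishes only at the origin, so with the branch of the logarithm fixed in Section \ref{S2} the factor $(4\pi^2|\xi|^2 + i\tau)^{-\frac{|\be|+2k}2}$ is smooth on ${\mathbb R}^4 \setminus \{0\}$; hence $m$ meets the hypotheses of the parabolic Mikhlin multiplier theorem cited in Section \ref{S2}. Therefore $g \mapsto {\mathcal F}^{-1}(m\,\widehat g)$ is bounded on $L^p({\mathbb R}^4)$ for $1 < p < \infty$, giving $\| D_t^k D_x^\be f\|_{\dot W^{s', \frac{s'}2}_p} = \| {\mathcal F}^{-1}(m\,\widehat g)\|_{L^p} \lesssim \| g\|_{L^p} = \| f\|_{\dot W^{s,\frac{s}2}_p}$.

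The routine parts are the commutation of the derivatives with convolution and the final $\ell^q$ summation; the two points needing care are (i) the parabolic scaling behind the Bernstein inequality, where one must track that the Jacobian $2^{5j}$ exactly cancels the normalization so that only the derivative weight $2^{(|\be|+2k)j}$ survives, and (ii) the verification that $m$ is an admissible multiplier — in particular that the chosen branch keeps $(4\pi^2|\xi|^2 + i\tau)^{-(|\be|+2k)/2}$ smooth away from the origin, which is precisely what makes the degree-$0$ parabolic homogeneity usable. I expect (ii) to be the main obstacle, as the rest is scale bookkeeping. Finally, note that the stated hypothesis $s > k + |\be|$ is not actually used: both inequalities are scale-invariant and hold for every $s \in \bR$, so the restriction is harmless and is presumably recorded only for the regime in which the lemma is later applied.
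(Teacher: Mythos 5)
Your proof is correct and follows essentially the same route as the paper's: the Besov estimate via frequency localization onto a fattened annulus bump and parabolic scaling (the paper phrases the step $\|D_t^kD_x^{\be}\widetilde\phi_j\|_{L^1}\approx 2^{(|\be|+2k)j}$ as a dilation-invariance statement for $L^p$-multiplier norms via Bergh--L\"ofstr\"om, which is the same computation as your Young's-inequality argument), and the Sobolev estimate by reducing to the parabolically homogeneous degree-zero multiplier $(4\pi^2|\xi|^2+i\tau)^{-(|\be|+2k)/2}(i\tau)^k(i\xi)^{\be}$ and citing the anisotropic multiplier theorem. Your closing observation that the hypothesis $s>k+|\be|$ is never used is also consistent with the paper's own proof, which does not invoke it either.
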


\begin{proof}
See Appendix \ref{AC}.
\end{proof}

%

The fundamental solution of the  heat equation is
\begin{align*}
\Gamma(x,t)  = \left\{\begin{array}{ll} \vspace{2mm}
\displaystyle\frac{1}{\sqrt{4\pi t}^3} e^{ -\frac{|x|^2}{4t} }, &   t > 0,\\
0, & t < 0.
\end{array}
\right.
\end{align*}

Let
\[
\Ga* f(x,t) = \int_0^t \int_{\R} \Ga (x-y, t-s) f (y,s) dyds
\]
and 
\[
\Ga g(x,t) = \int_{\R} \Ga (x-y, t) g (y) dy.
\]

\begin{prop}
[\cite{Triebel2}]
\label{P31}
For $1 \le p \le \infty$ and $k \ge 0$, 
\[
\| \Ga g\|_{\dot W^{2k, k}_p } \lesssim \| g \|_{\dot B^{2k -\frac2p}_{p,p} (\R)}.
\]
For $1 \le p,q \le \infty$ and $s > 0$, 
\[
\|\Ga g\|_{\dot B^{s, \frac{s}2}_{p,q}  } \lesssim \| g \|_{\dot B^{s -\frac2p}_{p,q} (\R)}. 
\]
\end{prop}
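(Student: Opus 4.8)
The plan is to establish the integer-order estimate into $\dot W^{2k,k}_p$ first, and then deduce the general Besov estimate by real interpolation; the analytic engine in both cases is the Gauss--Weierstrass (heat-kernel) characterization of homogeneous Besov spaces on $\R$, together with a precise bookkeeping of exponents that makes the special smoothness index $2k-\frac2p$ with third index $p$ collapse the characterizing integral against $\frac{dt}t$ into the flat parabolic norm $L^p(\R\times(0,\infty))$.

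For the first estimate I would fix $k \in \N_0$ and a pair $(l,\be)$ with $2l+|\be|=2k$. Writing $G$ for the unit Gaussian and $\Psi := D_x^\be\De^l G$, the identity $\pa_t\Ga = \De\Ga$ for $t>0$ together with parabolic scaling gives
\[
D_t^l D_x^\be \Ga g(x,t) = D_x^\be\De^l \Ga g(x,t) = t^{-k}(\psi_{(t)} * g)(x), \qquad \psi_{(t)}(x):= t^{-\frac32}\Psi(x/\sqrt t),
\]
where $\Psi$ is Schwartz and $\widehat\Psi(\xi) = (i\xi)^\be|\xi|^{2l}e^{-|\xi|^2}$ vanishes to order $2k$ at $\xi=0$, so $\Psi$ carries vanishing moments up to order $2k-1$. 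Hence
\[
\| D_t^l D_x^\be \Ga g\|_{L^p(\R\times(0,\infty))}^p = \int_0^\infty t^{-kp}\|\psi_{(t)} * g\|_{L^p(\R)}^p\, dt.
\]
The engine is the characterization, valid for a heat-scaled kernel with enough vanishing moments,
\[
\| g\|_{\dot B^{\sigma}_{p,q}(\R)} \approx \Big\| t^{-\frac{\sigma}2}\|\psi_{(t)} * g\|_{L^p(\R)}\Big\|_{L^q(\bR_+,\, \frac{dt}t)}.
\]
The decisive point is the exponent bookkeeping: choosing $q=p$ and $\sigma = 2k-\frac2p$ turns the weight into $t^{-\sigma/2}=t^{\frac1p-k}$, so that the $L^p(\bR_+,\frac{dt}t)$ integral is exactly $\int_0^\infty t^{-kp}\|\psi_{(t)}*g\|_{L^p}^p\,dt$, matching the previous display and yielding $\| D_t^l D_x^\be \Ga g\|_{L^p(\R\times(0,\infty))} \lesssim \| g\|_{\dot B^{2k-\frac2p}_{p,p}(\R)}$. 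Summing over the finitely many $(l,\be)$ gives the first estimate. (For $k=0$ one uses instead the $\sigma<0$ version with $\psi=G$, which needs no vanishing moments, and this is what makes the full range $1\le p\le\infty$ work without any Mikhlin-type reduction.)

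For the second estimate I would interpolate. By the first part, for any integers $k_0<k_1$ the map $g\mapsto \Ga g$ is bounded
\[
\Ga : \dot B^{2k_i-\frac2p}_{p,p}(\R) \ri \dot W^{2k_i,k_i}_{p}, \qquad i=0,1.
\]
Applying real interpolation with parameters $(\te,q)$ and using that it commutes with this linear operator: on the target side Proposition \ref{P24}(1) gives $(\dot W^{2k_0,k_0}_p,\dot W^{2k_1,k_1}_p)_{\te,q} = \dot B^{s,\frac{s}2}_{p,q}$ with $s=2k_0(1-\te)+2k_1\te$, while on the source side the standard real-interpolation identity for homogeneous Besov spaces on $\R$ gives $(\dot B^{2k_0-\frac2p}_{p,p},\dot B^{2k_1-\frac2p}_{p,p})_{\te,q}=\dot B^{s-\frac2p}_{p,q}(\R)$. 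Since $k_0,k_1,\te$ may be chosen to realize any prescribed $s>0$ (taking $k_0=0$ covers the small-$s$ range), this yields $\Ga : \dot B^{s-\frac2p}_{p,q}(\R)\ri \dot B^{s,\frac{s}2}_{p,q}$, which is the second estimate.

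The main obstacle is the first estimate, specifically the exponent bookkeeping that identifies the characterizing integral with the flat parabolic $L^p$ norm, and the verification that every term $D_t^l D_x^\be\Ga g$ reduces — via $\pa_t\Ga=\De\Ga$ and parabolic scaling — to a heat-scaled convolution by a Schwartz kernel with the requisite vanishing moments, so that the single characterization covers all admissible $(l,\be)$ uniformly in $1\le p\le\infty$. A secondary, purely technical nuisance typical of homogeneous spaces is justifying convergence and the modulo-polynomials identifications so that both the heat characterization and the interpolation identities apply verbatim; these are exactly the points handled by the cited reference \cite{Triebel2}.
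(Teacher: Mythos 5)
The paper offers no proof of Proposition \ref{P31}: it is stated with the citation \cite{Triebel2} and used as a black box, so there is no internal argument to compare yours against. Your reconstruction is the standard one and is essentially correct: the identity $D_t^lD_x^\be\Ga g=t^{-k}\psi_{(t)}*g$ with $\Psi=D_x^\be\De^lG$, the observation that the choice $q=p$, $\sigma=2k-\frac2p$ turns the weight $\bigl(t^{-\sigma/2}\bigr)^p\,\frac{dt}{t}$ into exactly $t^{-kp}\,dt$, and the passage to general $s>0$ by real interpolation, with Proposition \ref{P24}(1) on the target side and $(\dot B^{2k_0-2/p}_{p,p},\dot B^{2k_1-2/p}_{p,p})_{\te,q}=\dot B^{s-2/p}_{p,q}(\R)$ on the source side, are all sound; you should also invoke the identification \eqref{E26} so that bounding the intrinsic seminorm $\sum_{2l+|\be|=2k}\|D_t^lD_x^\be\Ga g\|_{L^p}$ really bounds the restriction-defined $\dot W^{2k,k}_p$ norm. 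Two caveats. First, for the degenerate kernels $\Psi=D_x^\be\De^lG$ the continuous characterization holds only as the one-sided bound $\lesssim\|g\|_{\dot B^\sigma_{p,p}(\R)}$, since the Tauberian nondegeneracy condition fails (e.g.\ the Fourier transform of $\pa_1G$ vanishes on a hyperplane); the displayed $\approx$ should be $\lesssim$, which fortunately is the only direction you use. Second, your claim that the argument covers the full range $1\le p\le\infty$ is not right for $k\ge1$: at $p=\infty$ the index $\sigma=2k$ coincides with the order of vanishing of $\widehat\Psi$ at the origin, the low-frequency part of the characterization sums with a logarithmic divergence, and indeed the first inequality, read in the derivative seminorm, degenerates at this endpoint (a lacunary function whose $j$-th dyadic block has $L^\infty$ size $2^{-2kj}$ exhibits the logarithmic loss as $t\to0$); your parenthetical remark rescues only $k=0$ with $p<\infty$. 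This endpoint defect is inherited from the statement as printed rather than introduced by your proof, and it is harmless for the paper, which applies the proposition only for $1<p<5$.
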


\begin{prop}
\label{P32}
Let $1< p < \infty$ and $1 \leq q \leq \infty$. 
\begin{itemize}
\item[(1)]
Let  $f\in \dot B^{s_1,\frac{s_1}2}_{p_1,q} $ for some
$(s_1, p_1)$ satisfying the conditions $p_1\leq p$, $0<s_1<s <1$, $\frac5{p_1}-\frac5{p} + s-s_1=1$. Then 
\begin{align*}
\| D_x \Ga* f\|_{ \dot B^{s,\frac{s}2 }_{p,q} }
 &\lesssim \|f\|_{\dot B^{s_1,\frac{s_1}2}_{p_1,q} }.
\end{align*}

\item[(2)]
If $1 < s $, then 
\[
\|  D_x  \Ga* f \|_{ \dot B^{s,\frac{s} 2 }_{p,q}({\mathbb R}^3 \times (0,\infty))} \lesssim \|f\|_{\dot B^{s-1 ,\frac{s } 2 -\frac12}_{p,q}({\mathbb R}^3 \times (0,\infty))}.
\]
\end{itemize}
\end{prop}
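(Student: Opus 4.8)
The plan is to deduce both statements from a single fact: the heat potential $\Ga*$ raises the anisotropic (parabolic) smoothness by exactly two orders, i.e.
\[
\Ga* : \dot W^{2k,k}_p \ri \dot W^{2k+2,k+1}_p, \qquad k \in \N_0,\ 1<p<\infty,
\]
is bounded. Granting this, real interpolation gives the Besov analogue
\[
\Ga* : \dot B^{\sigma,\frac\sigma2}_{p,q}\ri \dot B^{\sigma+2,\frac{\sigma+2}2}_{p,q},\qquad 0<\sigma<\infty,\ 1\le q\le\infty,
\]
by (1) of Proposition \ref{P24}: interpolating the pair of indices $k_0,k_1$ with parameter $\te$ produces domain $\dot B^{\sigma,\frac\sigma2}_{p,q}$ with $\sigma=2k_0(1-\te)+2k_1\te$ and range $\dot B^{\sigma+2,\frac{\sigma+2}2}_{p,q}$, so the shift by two is preserved. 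Composing with one spatial derivative, which costs exactly one order of parabolic smoothness — on the half space this is Lemma \ref{L29} transported through the extension $E$, since $E$ commutes with $D_x$ and $\|\cdot\|_{\dot B^{\rho,\frac\rho2}_{p,q}}\approx \|E\,\cdot\|_{\dot B^{\rho,\frac\rho2}_{p,q}({\mathbb R}^4)}$ — yields
\[
D_x\Ga* : \dot B^{\sigma,\frac\sigma2}_{p,q}\ri \dot B^{\sigma+1,\frac{\sigma+1}2}_{p,q}.
\]
Taking $\sigma=s-1>0$ is precisely the assertion (2).

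For the base estimate I would first record the classical case $k=0$: the operators $D_x^2\Ga*$ and $\pa_t\Ga*$ are parabolic Calderón--Zygmund operators of order zero on $\R\times(0,\infty)$, hence bounded on $L^p$ for $1<p<\infty$, which is the maximal regularity $\Ga*:L^p\ri\dot W^{2,1}_p$ (cf.\ the multiplier/singular-integral theory in \cite{CK}). To pass to general $k$ I would avoid differentiating $\Ga*$ in $t$ (which produces boundary contributions) and instead use the equation $\pa_t w=\De w+f$ for $w=\Ga*f$ together with the commutation $D_x\Ga*=\Ga* D_x$. Iterating the equation gives, for $2l+|\be|=2k+2$,
\[
\pa_t^l D_x^\be w=\De^l D_x^\be w+\sum_{j=0}^{l-1}\De^{j}D_x^\be\pa_t^{l-1-j}f .
\]
Every term in the sum is a derivative of $f$ of parabolic order $2l+|\be|-2=2k$, hence controlled by $\|f\|_{\dot W^{2k,k}_p}$. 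The leading term is purely spatial, $\De^l D_x^\be w=D_x^2\,\Ga*(D_x^{\ga}f)$ with $|\ga|=2k$, so boundedness of $D_x^2\Ga*$ on $L^p$ and $\|D_x^\ga f\|_{L^p}\le\|f\|_{\dot W^{2k,k}_p}$ finish the bound (all commutations being justified first for Schwartz $f$ and then by density). Note this route needs no extension of $f$ by zero and no trace of $f$ at $t=0$.

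Finally, (1) follows by combining the gain of one derivative with the anisotropic Sobolev embedding. From the displayed mapping with $\sigma=s_1$ we get $D_x\Ga* f\in \dot B^{s_1+1,\frac{s_1+1}2}_{p_1,q}$, and since $p_1\le p$, $s_1+1\ge s$, and $(s_1+1)-\frac5{p_1}=s-\frac5p$ — the last being a restatement of the hypothesis $\frac5{p_1}-\frac5p+s-s_1=1$ — part (2) of Proposition \ref{P24} gives $\dot B^{s_1+1,\frac{s_1+1}2}_{p_1,q}\subset \dot B^{s,\frac s2}_{p,q}$, whence $\|D_x\Ga*f\|_{\dot B^{s,\frac s2}_{p,q}}\lesssim\|f\|_{\dot B^{s_1,\frac{s_1}2}_{p_1,q}}$; the range $0<s_1<s<1$ keeps all indices in the admissible regime of the interpolation and the embedding. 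I expect the only genuine difficulty to be the base maximal-regularity estimate at $k=0$ on the half space with vanishing initial data; the bootstrap to higher $k$, the interpolation, the derivative step, and the embedding are then routine bookkeeping within the framework already established in \eqref{E26}--\eqref{E27} and Proposition \ref{P24}.
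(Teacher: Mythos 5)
Your proposal is correct in outline, but it reaches both conclusions by a genuinely different route from the paper. For part (2), the paper anchors everything at the two half-space estimates $\|D_x\Ga*f\|_{L^p}\lesssim\|f\|_{\dot W^{-1,-\frac12}_{p0}}$ (obtained by zero extension and duality with $\dot W^{1,\frac12}_{p'}$) and $\|D_x\Ga*f\|_{\dot W^{1,\frac12}_p}\lesssim\|f\|_{L^p}$, then climbs to $\dot W^{2k,k}_p\leftarrow\dot W^{2k-1,k-\frac12}_p$ using the same iterated-equation identity you write down together with the intersection characterization $\dot W^{s,\frac s2}_p=L^p(0,\infty;\dot W^s_p(\R))\cap L^p(\R;\dot W^{\frac s2}_p(0,\infty))$, and finally interpolates; your version instead proves the two-derivative gain $\Ga*:\dot W^{2k,k}_p\ri\dot W^{2k+2,k+1}_p$ at nonnegative integer regularity, interpolates, and peels off one derivative at the end via Lemma \thref{L29} and the extension $E$. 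The bootstrap step is essentially the paper's \eqref{AD45}--\eqref{AD47} in disguise, but your arrangement avoids the duality pairing and negative-order spaces entirely, which is cleaner. For part (1) the two arguments really diverge: the paper proves $\|D_x\Ga*f\|_{L^p}\lesssim\|f\|_{L^r}$ with $\frac5r=1+\frac5p$, interpolates against the $\dot W^{1,\frac12}_p\leftarrow L^p$ bound to land in a Lorentz space $L^{p^*,q}$, and then invokes the embedding of Lemma \thref{L26}; you instead apply part (2) at the exponent $p_1$ (gaining one full derivative at level $p_1$) and conclude by the anisotropic Sobolev embedding of Proposition \thref{P24}(2). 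Your derivation of (1) is shorter and makes the logical dependence (1) $\Leftarrow$ (2) explicit, but it silently requires $1<p_1<\infty$, since both the maximal-regularity base case and Lemma \thref{L29} fail at $p_1=1$; the paper's Lorentz-space route only ever uses singular-integral bounds at the exponent $p>1$ and therefore tolerates the borderline $p_1=1$, which the hypotheses of (1) do not exclude. Apart from that restriction, and the fact that the $k=0$ maximal-regularity estimate is asserted rather than proved (it is classical and you flag it), the argument is sound.
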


\begin{proof}
See Appendix \ref{AD}.
\end{proof}

\section{Proof of  Theorem \ref{T1}}
\label{S3}
\setcounter{equation}{0}

\subsection{Approximating solutions}

Let $F^0 = G^0 = H^0 =0$.
Let$(u^1,p^1)$ and $b^1$ be the solution of the equations
\begin{align*}
\left\{\begin{array}{l}\vspace{2mm}
u^1_t - \De u^1 + \na p^1 =0 \quad \mbox{ in } \quad \R\times (0,\infty), \\ 
\vspace{2mm}
b^1_t -\De  b^1 = 0 \quad \mbox{ in } \quad \R\times (0,\infty), \\
\vspace{2mm}
{\rm div} \, u^1 = {\rm div} \, b^1 = 0 \quad \mbox{ in } \quad \R\times (0,\infty),\\
u^1|_{t=0} = u_0, \quad b^1|_{t=0} = b_0.
\end{array}
\right.
\end{align*}

If we have $(u^{m}, p^{m})$ and $b^m$ for some $m \in \N_0$, then let us $(u^{m+1}, p^{m+1})$ and $b^{m+1}$ be the solution of the equations
\begin{equation}
\label{E31}
\begin{split}
\left\{\begin{array}{l} \vspace{2mm}
u^{m+1}_t - \De u^{m+1} + \na p^{m+1} =  -\nabla\cdot \,  ( u^m \otimes u^m ) + (\na \times b^m) \times b^m  \quad \mbox{in} \quad \R\times (0,\infty),\\
\vspace{2mm}
\vspace{2mm} {\rm div}\,  u^{m+1} = 0 \quad \mbox{in} \quad \R\times (0,\infty), \\
\vspace{2mm}
b^{m+1}_t -\De  b^{m+1} =  - \na \times ((\nabla \times b^m) \times b^m) + \nabla \times (u^m \times b^m) \quad \mbox{in} \quad \R\times (0,\infty),\\
 u^{m+1}|_{t=0}= u_0, \quad  b^{m+1}|_{t =0} = b_0.
\end{array}
\right.
\end{split}
\end{equation}

Note that $u^{m+1}$ and $b^{m+1}$ can be written in the form
\begin{align*}
u^{m+1}(x,t)  & = \int_{\R} \Ga(x-y, t) u_0 (y) dy +  \int_0^t \int_{\R} \na \Ga (x-y, t-s) {\mathbb P} (u^m \otimes u^m ) (y,s)dyds\\
& \qquad  - \int_0^t \int_{\R} \na \Ga(x-y, t-s)   :  {\mathbb P}\big(  ( b^m \otimes b^m) \big)(y,s) dyds\\
& \qquad   +  \frac12  \int_0^t \int_{\R}    {\mathbb P} \na \Ga(x-y, t-s)  | b^m (y,s)|^2 dyds,\\
b^{m+1}(x,t) &  = \int_{\R} \Ga(x -y, t) b_0 (y) dy +  \int_0^t \int_{\R}   \na^2 \Ga(x-y,t-s)b^m (y,s) \times b^m (y,s)   dyds\\
& \qquad  +  \int_0^t \int_{\R } \na \Ga(x-y, t-s) \times ( u^m\times b^m )(y,s) dyds,
\end{align*} 
Let 
\begin{align*}
M^u_{\al, q} : =  \|u_0\|_{    \dot  B^{\al-\frac{2}{p}}_{p,q}({\mathbb
R}^{n})} ,\quad
M^b_{\al,q} : =  \|b_0\|_{   \dot   B^{\al-\frac{2}{p}}_{p,q}({\mathbb
R}^{n})} .
\end{align*}
Since $\frac{3}p -1 > -\frac2p$, from  Proposition \ref{P31}, we have
\begin{equation}
\label{E32}
\begin{split}
&\| u^1\|_{   \dot B^{\al -1,\frac{\al -1}{2} }_{p,q} }  \leq c M^u_{\al -1, q},
\quad \   \| u^1\|_{   \dot B^{\frac{5}p -1,\frac{5}{2p} -\frac12  }_{p,q} } \leq c M^u_{\frac{5}p -1,q} \\
& \| b^1\|_{   \dot B^{\frac{5}p -1,\frac{5}{2p} -\frac12  }_{p,q} }  \leq c M^b_{\frac{5}p-1,q},
\quad  \| b^1\|_{   \dot B^{\al,\frac{\al }2}_{p,q}  }    \leq c M^b_{\al, q}.
\end{split}
\end{equation}

Let $1 < \al < 2$.
Let $\frac{2 -\al}{5} <\de < \frac1{5}$ and
\begin{align*}
\frac1{p_1} = \frac1p +\de, \quad \be = 5\de +\al -2,\quad 
  \frac{5}{p_2} =\frac{5}p +\be -\al+1. 
\end{align*}
Note that
\begin{align}
\label{E33}
 0 < \be < \al-1 <   1, \quad \frac{5}{p_1} -\frac{5}p +\al-1 -\be =1
\end{align}
and
\begin{align}
\label{E34}
\frac1{p_2} + \frac1{5} = \frac1{p_1}, \quad \be -\frac{5}{p_2} = \al -1 -\frac{5}p.
\end{align}

Since $ 0< \al -1 < 1$, from  \eqref{E33}, Proposition \ref{P31}    and Proposition \ref{P32},   we have
\begin{align}
\label{E35}
\|  u^{m+1} \|_{  \dot B^{\al-1,\frac{\al -1}2}_{p,q}  } 
&\lesssim M^u_{\al -1, q} +      \sum_{1 \leq i, j \leq 3} \|u^m_i u^m_j\|_{\dot B^{\be, \frac{\be}2}_{p_1, q} }  +   \sum_{1 \leq i, j \leq 3} \|  b^m_i   b^m_j \|_{ \dot B^{\be, \frac{\be}2}_{p_1, q}  }.
\end{align}

From Lemma \ref{L27}, we have
\begin{equation}
\label{E36}
\begin{split}
\| u^m_i u^m_j\|_{\dot B^{\be, \frac{\be}2}_{p_1, q}   }
&\lesssim \| u^m \|_{\dot B^{\frac{5}p -1, \frac{5}{2p} -\frac12}_{p, 5} }  \| u^m  \|_{\dot B^{\al -1,\frac{\al-1}2}_{p, q}  }  \quad 1 \leq i, j \leq 3, \\
\| b^m_i b^m_j \|_{\dot B^{\be, \frac{\be}2}_{p_1, q}   }  
&\lesssim \| b^m \|_{\dot B^{\frac{5}p -1, \frac{5}{2p} -\frac12}_{p, 5} } \| b^m  \|_{\dot B^{\al -1,\frac{\al-1}2}_{p, q}  } \quad 1 \leq i, j \leq 3.
\end{split}
\end{equation}
Hence, from \eqref{E34}-\eqref{E36},  for $1 < \al <2$, we have
\begin{align}
\label{E37}
\begin{split}
\| u^{m+1}\|_{     \dot B^{\al -1,\frac{\al -1}2}_{p,q} } 
&\lesssim M_{\al -1,q}^u +   \| u^m \|_{\dot B^{\frac{5}p -1, \frac{5}{2p} -\frac12}_{p, 5} }\|   u^m  \|_{ \dot  B^{\al -1,\frac{\al-1}2}_{p,q} } 
+ \| b^m \|_{\dot B^{\frac{5}p-1, \frac{5}{2p} -\frac12}_{p, 5} } \|   b^m  \|_{   \dot  B^{\al -1,\frac{\al-1}2}_{p,q} }.
\end{split}
\end{align}
If $\al > 2$, then from Proposition \ref{P32}, we have
\[
\|  u^{m+1} \|_{  \dot B^{\al-1,\frac{\al -1}2}_{p,q}  }
\lesssim M_{\al -1,q}^u +       \sum_{1 \leq i, j \leq 3} \|u^m_i  u^m_j\|_{\dot B^{\al-2, \frac{\al -2}2}_{p, q} }  +   \sum_{1 \leq i, j \leq 3}\|  b^m_i   b^m_j\|_{ \dot B^{\al -2, \frac{\al -2}2}_{p, q}  }.
\]
From Lemma \ref{L27}, we have
\begin{align}
\label{E38}
\begin{split}
\| b^m_i b^m_j \|_{\dot B^{\al-2,\frac{\al-2}2 }_{p,q} }
&\lesssim \|     b^m \|_{\dot B^{\al -1,\frac{\al -1}2}_{p,q} } \| b^m \|_{\dot B^{\frac{5}p -1, \frac{5}{2p} -\frac12}_{p,5} } \quad 1 \leq i, j \leq 3.
\end{split}
\end{align}
Thus, we obtain \eqref{E37} for $ \al > 2$.

Next, we estimate $b^{m+1}$.  If $ 1 < \al < 2$, then from Proposition \ref{P31} and  Proposition \ref{P32}, we have
\begin{equation}
\label{E39}
\|b^{m+1} \|_{  \dot B^{\al-1,\frac{\al -1}2}_{p,q}  } 
\lesssim M_{\al -1,q}^b +       \|u^m_i  b^m_j\|_{\dot B^{\be, \frac{\be}2}_{p_1, q} }  +   \|  b^m_i   u^m_j\|_{ \dot B^{\be, \frac{\be}2}_{p_1, q}  } 
+ \|b^m_i   b^m_j \|_{    \dot B^{\al-1,\frac{\al }2-\frac12}_{p,q}}.
\end{equation}
By Lemma \ref{L27}, we get
\begin{align}
\label{E310}
\begin{split}
 \|    b^m_i  b^m_j \|_{    \dot B^{\al-1,\frac{\al }2-\frac12}_{p,q} }  
&\lesssim \|     b^m \|_{\dot B^{\al -1,\frac{\al -1}2}_{p,q} } \| b^m \|_{\dot B^{\frac{5}p, \frac{5}{2p}}_{p,1} } \,\, 1 \leq i, j \leq 3, \\
 \| u^m_i  b^m_j\|_{\dot B^{\be, \frac{\be}2}_{p_1,q}} 
&\lesssim \| u^m \|_{\dot B^{\frac{5}p -1, \frac{5}{2p} -\frac12}_{p, 5} }\|   b^m  \|_{ \dot  B^{\al -1,\frac{\al-1}2}_{p,q} } +
\| b^m \|_{\dot B^{\frac{5}p-1, \frac{5}{2p} -\frac12}_{p, 5} } \|   u^m  \|_{   \dot  B^{\al -1,\frac{\al-1}2}_{p,q} } \, \,  1 \leq i, j \leq 3.
 \end{split}
\end{align}
According to \eqref{E39} and \eqref{E310}, we have
\begin{align}
\label{E311}
\begin{split}
  \|  b^{m+1} \|_{  \dot B^{\al-1,\frac{\al -1}2}_{p,q}  }
&\lesssim M_{\al-1, q}^b+   \| u^m \|_{\dot B^{\frac{5}p -1, \frac{5}{2p} -\frac12}_{p, 5} }\|   b^m  \|_{ \dot  B^{\al -1,\frac{\al-1}2}_{p,q} }\\
& \qquad +
\| b^m \|_{\dot B^{\frac{5}p-1, \frac{5}{2p} -\frac12}_{p, 5} } \|   u^m  \|_{   \dot  B^{\al -1,\frac{\al-1}2}_{p,q} } 
+ \|     b^m \|_{\dot B^{\al -1,\frac{\al -1}2}_{p,q} } \| b^m \|_{\dot B^{\frac{5}p, \frac{5}{2p}}_{p,1} }.
\end{split}
\end{align}
If $  2 < \al $, then by Proposition \ref{P31} and Proposition \ref{P32},  we have
\begin{align*}
  \|  b^{m+1} \|_{  \dot B^{\al-1,\frac{\al -1}2}_{p,q}  }
&\lesssim M^b_{\al -1,q} +  \sum_{1 \leq i, j \leq 3}\|u^m_i   b^m_j\|_{\dot B^{\al -2, \frac{\al -2}2}_{p_1, q} }   + \sum_{1 \leq i, j \leq 3}\|b^m_i  b^m_j\|_{    \dot B^{\al-1,\frac{\al }2-\frac12}_{p,q} }.
\end{align*}
 Applying \eqref{E38} and \eqref{E310}, we obtain  \eqref{E311} for $2 < \al$.
 
For  $ 1< \al  $, then from  Proposition \ref{P31},  Proposition \ref{P32}, and \eqref{E310},  we have
\begin{align}
\begin{split}
\|  b^{m+1} \|_{{\dot B^{\al, \frac{\al}2}_{p,q} }  } 
& \leq c \big( M_{\al,q}^b +       \sum_{1 \leq i, j \leq 3}\|u^m_i   b^m_j\|_{    \dot B^{\al -1,\frac{\al }2 -\frac12}_{p,q} }  +   \sum_{1 \leq i, j \leq 3}\|    b^m_i  b^m_j \|_{    \dot B^{\al,\frac{\al }2}_{p,q} } \big)\\
& \leq c \big( M_{\al,q}^b +  \| b^m \|_{\dot B^{\frac{5}p-1, \frac{5}{2p} -\frac12}_{p, 5} } \| u^m \|_{ \dot B^{\al-1, \al-\frac12 }_{p,q} } \\
&  \qquad     +
        \| u^m \|_{\dot B^{\frac{5}p-1, \frac{5}{2p}-\frac12}_{p,5} }\| b^m \|_{ \dot B^{\al, \frac{\al}{2}}_{p,q}    }      + \| b^m \|_{\dot B^{\frac{5}p, \frac{5}{2p}}_{p,1} }\|     b^m \|_{\dot B^{\al,\frac{\al}2}_{p,q} }\big).
        \end{split}
\end{align}
For the case $\al =\frac{5}p$, from \eqref{E37} and \eqref{E311},   we have
\begin{align}
\label{E314}
\begin{split}
  \|  u^{m+1} \|_{  \dot B^{\frac{5}p-1, \frac{5}{2p} -\frac12}_{p, 5}  }
   &  \leq c \big(  M_{\frac{5}p-1,5}^u +   \| u^m \|^2_{\dot B^{\frac{5}p -1, \frac{5}{2p} -\frac12}_{p,5} } +    \| b^m \|^2_{\dot B^{\frac{5}p-1, \frac{5}{2p} -\frac12}_{p,5} } \big),
\end{split}
\end{align}
\begin{align}
\label{E315}
\notag \|  b^{m+1} \|_{{\dot B^{\frac{5}p -1, \frac{5}{2p} -\frac12 }_{p,5} }  } & \leq c \big( M_{\frac{5}p -1,5} + \| u^m \|_{\dot B^{\frac{5}p -1, \frac{5}{2p} -\frac12}_{p, 5} }\| b^m \|_{ \dot B^{\frac{5}p-1, \frac{5}{2p}-\frac12}_{p, 5}    } \\
& \quad    +  \| b^m \|_{\dot B^{\frac{5}p, \frac{5}{2p}}_{p,1} } \| b^m \|_{ \dot B^{\frac{5}p-1, \frac{5}{2p}-\frac12 }_{p, 5 } }
          \big),
\end{align}
\begin{align}
\begin{split}
\label{E316}
\|  b^{m+1} \|_{  \dot B^{\frac{5}p,\frac{5}{2p}}_{p,1}   } & \leq c \big( M_{\frac{5}p,1} +  \| b^m \|_{ \dot B^{\frac{5}p-1, \frac{5}{2p}-\frac12 }_{p,5}} \| u^m \|_{ \dot B^{\frac{5}p, \frac{5}{2p}}_{p ,1}  } \\
&  \qquad     +
        \| u^m \|_{\dot B^{\frac{5}p-1, \frac{5}{2p}-\frac12}_{p,5} }\| b^m \|_{ \dot B^{\frac{5}p, \frac{5}{2p}}_{p,1}    }      + \| b^m \|^2_{\dot B^{\frac{5}p, \frac{5}{2p}}_{p,1} }\big).
\end{split}
\end{align}
Take $\ep < 1$ and  $M > 0$ satisfying
\begin{align*}
M_{\frac{5}p-1,5}^u, \,\, M_{\frac{5}p-1,5}^b, \,\, M_{\frac{5}p,1}^b < \frac{\ep}{8c},\quad 
  M_{\al-1, q}^u, \,\,  M_{\al-1,q}^b, \,\,  M^b_{\al,q} <  \frac1{4c} M.
\end{align*}
Then, from \eqref{E32}, we have
\begin{align*}
  \|  u^{1} \|_{\dot B^{\frac{5}p-1, \frac{5}{2p}-\frac12 }_{p,5} } < \ep, \quad     \|  b^{1} \|_{\dot B^{\frac{5}p-1, \frac{5}{2p}-\frac12 }_{p,5} } < \ep, \quad \|  b^{1} \|_{\dot B^{\frac{5}p, \frac{5}{2p} }_{p,1} } < \ep,\\
  \|  u^{1} \|_{\dot B^{\al-1, \frac{\al-1}{2}}_{p,q} }   < \frac12 M < M, \quad
 \|  b^{1} \|_{\dot B^{\al-1, \frac{\al-1}{2}}_{p,q} }   < \frac12 M < M,\quad  \|  b^{1} \|_{\dot B^{\al, \frac{\al}{2}}_{p,q} }   < \frac12 M < M.
\end{align*}
Suppose that
\begin{align*}
   \|  u^{m} \|_{\dot B^{\frac{5}p-1, \frac{5}{2p}-\frac12 }_{p,5} } < \ep, \quad     \|  b^{m} \|_{\dot B^{\frac{5}p-1, \frac{5}{2p}-\frac12 }_{p,5} } < \ep, \quad \|  b^{m} \|_{\dot B^{\frac{5}p, \frac{5}{2p} }_{p,1} } < \ep,\\
  \|  u^{m} \|_{\dot B^{\al-1, \frac{\al-1}{2}}_{p,q} }    < M, \quad
 \|  b^{m} \|_{\dot B^{\al-1, \frac{\al-1}{2}}_{p,q} }    < M,\quad  \|  b^{m} \|_{\dot B^{\al, \frac{\al}{2}}_{p,q} }   < M.
\end{align*}
Then, from  \eqref{E37},  \eqref{E314},  \eqref{E315} and \eqref{E316}, we have
\begin{align*}
\|  u^{m +1} \|_{\dot B^{\frac{5}p-1,  \frac{5}{2p}-\frac12 }_{p, 5} }   &\leq c \big( \frac12 \ep + \ep^2\big) < \ep, \\
\|  b^{m +1} \|_{\dot B^{\frac{5}p-1,  \frac{5}{2p}-\frac12 }_{p, 5} }
&\leq \big(\frac13 \ep + \frac13 \ep +  \frac13\ep   \big) = \ep \\
\|  b^{m+1} \|_{  \dot B^{\frac{5}p,\frac{5}{2p}}_{p,1}   } 
&\leq \big(\frac13 \ep + \frac13 \ep +  \frac13\ep   \big) =\ep \\
\| u^{m+1}\|_{     \dot B^{\al -1,\frac{\al -1}2}_{p,q } } 
&\leq c \big( M_{\al -1,q}^u + \ep M\big)    <    M \\
 \|  b^{m+1} \|_{  \dot B^{\al,\frac{\al}2}_{p}   }
&\leq  \big(\frac12 M + \ep M \big) < M.
\end{align*}
Hence for all $m \in \N$,
\begin{equation}
\label{E317}
\| u^m\|_{\dot B^{\frac{5}p-1, \frac{5}{2p} -\frac12}_{p,5 }}, \quad \| b^m \|_{\dot B^{\frac{5}p-1, \frac{5}{2p}-\frac12}_{p,5} }, \,\,  \|b^m \|_{\dot B^{\frac{5}p, \frac{5}{2p}}_{p,1} } <\ep, \quad 
\|  u^{m} \|_{\dot B^{\al-1, \frac{\al-1}{2}}_{p,q} }, \,\,   \|  b^{m} \|_{\dot B^{\al, \frac{\al}{2}}_{p,q} }   < M.
\end{equation}

\subsection{Uniform convergence}

Let $U^m=u^{m}-u^{m-1}$, $B^m = b^{m} -b^{m-1}$ and $P^m=p^{m}-p^{m-1}$.
Then $(U^m,B^m,P^m)$ satisfies the equations
\begin{align*}
U^{m +1}_t - \De U^{m +1} + \na P^{m +1} &  =- (U^{m}\cdot \na u^{m} +u^{m-1}\cdot  \na U^{m}) - ( B^{m} \cdot \na b^{m} +  b^{m-1} \cdot \na B^{m-1} ),\\
  \hspace{30mm} \divg U^{m +1}& =0, \\
B^{m +1}_t - \De B^{m +1}  &  =- (u^m\cdot \na B^{m-1}+U^{m-1}\cdot \na b^{m-1})
 + (b^m\cdot \na U^{m-1} + B^{m-1}\cdot \na u^{m-1})\\
 & \qquad \  -\na \times \big( ( \na \times  B^{m-1} ) \times b^m - ( \na \times  b^{m-1} ) \times B^{m-1} \big)
\end{align*}
in $\R\times (0,\infty)$ with initial conditions $U^m|_{t=0} = 0$ and $B^m|_{t=0} = 0$.
According to the previous calculations, we have 
\begin{align*}
  \|  U^{m+1} \|_{  \dot B^{\frac{5}p-1, \frac{5}{2p} -\frac12}_{p, 5}  }&  \leq c \big(  \| u^{m-1} \|_{\dot B^{\frac{5}p -1, \frac{5}{2p} -\frac12}_{p,5} } +    \| b^{m-1} \|_{\dot B^{\frac{5}p-1, \frac{5}{2p} -\frac12}_{p,5} }  +  \| u^m \|_{\dot B^{\frac{5}p -1, \frac{5}{2p} -\frac12}_{p,5} } +    \| b^m \|_{\dot B^{\frac{5}p-1, \frac{5}{2p} -\frac12}_{p,5} } \big)\\
  & \qquad \big(    \| U^m \|_{\dot B^{\frac{5}p -1, \frac{5}{2p} -\frac12}_{p,5} } +    \| B^m \|_{\dot B^{\frac{5}p-1, \frac{5}{2p} -\frac12}_{p,5} } \big)\\
& < 4\ep \big(    \| U^m \|_{\dot B^{\frac{5}p -1, \frac{5}{2p} -\frac12}_{p,5} } +    \| B^m \|_{\dot B^{\frac{5}p-1, \frac{5}{2p} -\frac12}_{p,5} } \big)\\
& < \frac14 \big(    \| U^m \|_{\dot B^{\frac{5}p -1, \frac{5}{2p} -\frac12}_{p,5} } +    \| B^m \|_{\dot B^{\frac{5}p-1, \frac{5}{2p} -\frac12}_{p,5} } \big),
\end{align*}
\begin{align*}
\notag \|  B^{m+1} \|_{{\dot B^{\frac{5}p -1, \frac{5}{2p} -\frac12 }_{p,5} }  } & \leq  \big(  \| u^{m-1} \|_{\dot B^{\frac{5}p -1, \frac{5}{2p} -\frac12}_{p,5} } +    \| b^{m-1} \|_{\dot B^{\frac{5}p-1, \frac{5}{2p} -\frac12}_{p,5} }  +  \| u^m \|_{\dot B^{\frac{5}p -1, \frac{5}{2p} -\frac12}_{p, 5} } \\
& \quad +    \| b^m \|_{\dot B^{\frac{5}p-1, \frac{5}{2p} -\frac12}_{p,5} }
 + \| b^{m-1} \|_{ \dot B^{\frac{5}p, \frac{5}{2p} }_{p, 1}    } + \| b^{m} \|_{ \dot B^{\frac{5}p, \frac{5}{2p} }_{p,1}    }  \big)\\
  & \qquad \big(    \| U^m \|_{\dot B^{\frac{5}p -1, \frac{5}{2p} -\frac12}_{p, 5} } +    \| B^m \|_{\dot B^{\frac{5}p-1, \frac{5}{2p} -\frac12}_{p,5} }  +    \| B^m \|_{\dot B^{\frac{5}p, \frac{5}{2p} }_{p,1} } \big)\\
& \leq 6\ep\big(    \| U^m \|_{\dot B^{\frac{5}p -1, \frac{5}{2p} -\frac12}_{p, 5} } +    \| B^m \|_{\dot B^{\frac{5}p-1, \frac{5}{2p} -\frac12}_{p,5} }   +    \| B^m \|_{\dot B^{\frac{5}p, \frac{5}{2p} }_{p,1} }\big)\\
& < \frac14  \big(    \| U^m \|_{\dot B^{\frac{5}p -1, \frac{5}{2p} -\frac12}_{p, 5} } +    \| B^m \|_{\dot B^{\frac{5}p-1, \frac{5}{2p} -\frac12}_{p,5} }  +     \| B^m \|_{\dot B^{\frac{5}p, \frac{5}{2p} }_{p,1} } \big),
\end{align*}
and 
\begin{align*}
\notag \|  B^{m+1} \|_{{\dot B^{\frac{5}p, \frac{5}{2p} }_{p,1} }  } & \leq  \big(  \| u^{m-1} \|_{\dot B^{\frac{5}p -1, \frac{5}{2p} -\frac12}_{p,5} } +    \| b^{m-1} \|_{\dot B^{\frac{5}p-1, \frac{5}{2p} -\frac12}_{p,5} }  +  \| u^m \|_{\dot B^{\frac{5}p -1, \frac{5}{2p} -\frac12}_{p, 5} } \\
& \quad +    \| b^m \|_{\dot B^{\frac{5}p-1, \frac{5}{2p} -\frac12}_{p,5} }
 + \| b^{m-1} \|_{ \dot B^{\frac{5}p, \frac{5}{2p} }_{p, 1}    } + \| b^{m} \|_{ \dot B^{\frac{5}p, \frac{5}{2p} }_{p,1}    }  \big)\\
  & \qquad \big(    \| U^m \|_{\dot B^{\frac{5}p -1, \frac{5}{2p} -\frac12}_{p, 5} } +    \| B^m \|_{\dot B^{\frac{5}p-1, \frac{5}{2p} -\frac12}_{p,5} }  +    \| B^m \|_{\dot B^{\frac{5}p, \frac{5}{2p} }_{p,1} } \big)\\
& \leq 6\ep\big(    \| U^m \|_{\dot B^{\frac{5}p -1, \frac{5}{2p} -\frac12}_{p, 5} } +    \| B^m \|_{\dot B^{\frac{5}p-1, \frac{5}{2p} -\frac12}_{p,5} }   +    \| B^m \|_{\dot B^{\frac{5}p, \frac{5}{2p} }_{p,1} }\big)\\
& < \frac14  \big(    \| U^m \|_{\dot B^{\frac{5}p -1, \frac{5}{2p} -\frac12}_{p, 5} } +    \| B^m \|_{\dot B^{\frac{5}p-1, \frac{5}{2p} -\frac12}_{p,5} }  +     \| B^m \|_{\dot B^{\frac{5}p, \frac{5}{2p} }_{p,1} } \big).
\end{align*}
Therefore, 
\begin{equation}
\label{E318}
\begin{split}
&\|  B^{m+1} \|_{{\dot B^{\frac{5}p -1, \frac{5}{2p} -\frac12 }_{p, 5} }  } + \|  U^{m+1} \|_{{\dot B^{\frac{5}p -1, \frac{5}{2p} -\frac12 }_{p, 5} }  } + \|  B^{m+1} \|_{{\dot B^{\frac{5}p, \frac{5}{2p} }_{p,1} }  }\\
& \quad  < \frac12 \big(\|  U^{m} \|_{{\dot B^{\frac{5}p -1, \frac{5}{2p} -\frac12 }_{p, 5} }  }   + \|  B^{m} \|_{{\dot B^{\frac{5}p -1, \frac{5}{2p} -\frac12 }_{p, 5} }  } + \|  B^{m} \|_{{\dot B^{\frac{5}p, \frac{5}{2p} }_{p,1} }  } \big).
\end{split}
\end{equation}
This implies that $(u^m,b^m)$ is Cauchy sequence in $ \dot B^{\frac{5}p-1, \frac{5}{2p} -\frac12}_{p, 5} \times \big( \dot B^{\frac{5}p-1, \frac{5}{2p} -\frac12}_{p, 5}  \cap \dot B^{\frac{5}p, \frac{5}{2p} }_{p, 1}  \big)$. Hence,  $ u^m$  and $b^m$ converge to $u$ and $b$ in $\dot B^{\frac{5}p-1, \frac{5}{2p} -\frac12}_{p, 5}$ and $\dot B^{\frac{5}p-1, \frac{5}{2p} -\frac12}_{p, 5}  \cap \dot B^{\frac{5}p, \frac{5}{2p}}_{p, 1}$, respectively.

\subsection{Existence}

Let $u$ and $b$ be the solution constructed in the previous section. Since $u^m$ and $b^m$ converge to $u$ and $b$ in $\dot B^{\frac{5}p-1, \frac{5}{2p} -\frac12}_{p, 5}$ and $\dot B^{\frac{5}p-1, \frac{5}{2p} -\frac12}_{p, 5}  \cap \dot B^{\frac{5}p, \frac{5}{2p}}_{p, 1}$, respectively, from \eqref{E317}, we have
\begin{align*}
\| u\|_{\dot B^{\frac{5}p-1, \frac{5}{2p} -\frac12}_{p,5}}, \,\, \| b \|_{\dot B^{\frac{5}p-1, \frac{5}{2p}-\frac12}_{p,5} } , \,\, \| b \|_{\dot B^{\frac{5}p, \frac{5}{2p}}_{p,1} }\leq \ep,\quad 
\|  u \|_{\dot B^{\al-1, \frac{\al-1}{2}}_{p,q} }, \,\,   \|  b \|_{\dot B^{\al, \frac{\al}{2}}_{p,q} }   \leq  M.
\end{align*}
In this section, we will show that $u$ and $b$ satisfy the weak formulation of Hall-MHD equations; that is, $u,b$ is a weak solution of Hall-MHD equations with the appropriate distribution $p$.
Let $\Phi\in C^\infty_{0}( \R \times  (0,\infty))$ with $\mbox{div }\Phi=0$.
Observe that
\begin{align*}
-\int^\infty_0\int_{\R} u^{m+1}\cdot \Delta\Phi dxdt&=\int^\infty_0\int_{\R}u^{m+1}\cdot \Phi_t+(u^m\otimes u^m): \nabla \Phi  - \big( b^m  \otimes b^m \big)  : \na \Phi dxdt\\
&\quad+<u_0,\Phi(\cdot,0)>,\\
-\int^\infty_0\int_{\R} b^{m+1}\cdot \Delta\Phi dxdt&=\int^\infty_0\int_{\R}b^{m+1}\cdot \Phi_t +  \big( u^m\times b^m \big)  \cdot \na  \times   \Phi +  (b^m \otimes b^m) :     \na  (\na \times     \Phi  )  dxdt\\
&\quad+<b_0,\Phi(\cdot,0)>.
\end{align*}
Since $ \dot B^{\frac{5}p-1, \frac{5}{2p} -\frac12}_{p, 5}  \subset L^{5} $ by Lemma \ref{L26}, $ u^m$ and $b^m$  converge to $u$ and $b$ in $L^{5}$, respectively,  and so do $ u^m \otimes u^m$,   $u^m \times b^m$ and $  b^m\otimes b^m$ to $u \otimes u$,  $u \times b$ and $  b \otimes b$ in $L^{\frac{5}2}$, respectively.
Now sending $m$ to $\infty$, we obtain
\begin{align*}
-\int^\infty_0\int_{\R} u\cdot \Delta\Phi dxdt&=\int^\infty_0\int_{\R}u\cdot \Phi_t+(u\otimes u): \nabla \Phi  - \big( b  \otimes b \big)  : \na \Phi dxdt\\
&\quad+<u_0,\Phi(\cdot,0)>,\\
-\int^\infty_0\int_{\R} b\cdot \Delta\Phi dxdt&=\int^\infty_0\int_{\R}b\cdot \Phi_t+  \big( u\times b \big)  \cdot \na  \times   \Phi + (b \otimes b) :  \na  (\na \times     \Phi  )   dxdt\\
&\quad+<b_0,\Phi(\cdot,0)>.
\end{align*}
Hence, $(u,b)$ is solution of \eqref{E11}.

\subsection{Uniqueness }
Let $ u_1\in    \dot B^{\frac{5}p -1, \frac{5}{2p}-\frac12}_{p, 5} $ and $ b_1\in  \dot B^{\frac{5}p-1, \frac{5}{2p}-\frac12}_{p,5}    \cap  \dot B^{\frac{5}p, \frac{5}{2p}}_{p, 1} $ be  another weak solution of \eqref{E11}  with pressure $p_1$ satisfying
\[
\| u_1\|_{\dot B^{\frac{5}p-1, \frac{5}{2p} -\frac12}_{p, 5}}, \,\, \| b_1 \|_{\dot B^{\frac{5}p-1, \frac{5}{2p}-\frac12}_{p, 5} } , \,\, \| b_1 \|_{\dot B^{\frac{5}p, \frac{5}{2p}}_{p, 1} }\leq \ep.
\]
Let $U = u - u_1,~B = b - b_1$ and $P = p -p_1$. 
Then $(U, B, P)$ satisfies the equations
\begin{align*}
U_t - \De U + \na P &  =- \big( (U\cdot \na ) u_{1}- (u\cdot  \na) U \big) - \big(( B \cdot \na) b_{1} -  (b \cdot \na ) B \big),\\
  \hspace{30mm} \divg U & =0, \\
B_t - \De B  &  =- \big((u_1\cdot \na ) B+ (U\cdot \na) b \big)
 + \big( (b_1\cdot \na)  U +  (B\cdot \na ) u \big)\\
 & \qquad \  -\na \times \big( ( \na \times  B ) \times b - ( \na \times  b_1 ) \times B \big)
\end{align*}
in $\R\times (0,\infty)$ with initial conditions $U|_{t=0} = 0$ and $B|_{t=0} = 0$.
The estimate  \eqref{E318} implies that
\[
\|  U \|_{ \dot B^{\frac{5}p-1, \frac{5}{2p} - \frac12}_{p, 5} }
     + \|  B \|_{ \dot B^{\frac{5}p-1, \frac{5}{2p} - \frac12}_{p, 5} } +  \|  B \|_{ \dot B^{\frac{5}p, \frac{5}{2p} }_{p,1} }\\
< \frac12  \big(\|  U \|_{ \dot B^{\frac{5}p-1, \frac{5}{2p} - \frac12}_{p, 5} }
     + \|  B \|_{ \dot B^{\frac{5}p-1, \frac{5}{2p} - \frac12}_{p, 5} } + \|  B \|_{ \dot B^{\frac{5}p, \frac{5}{2p} }_{p,1} }   \big).
\]
Hence $u \equiv u_1$ and $b =b_1$ in $\R \times (0, \infty)$.

This completes the proof of Theorem \ref{T1}.

\appendix
\setcounter{equation}{0}
\section{Proof of Lemma \ref{L26}}
\label{AA}
\setcounter{equation}{0}
Let $\Phi = \phi_{-1} + \phi_0  + \phi_1$ and $\Phi_j(x,t) = 2^{5j} \Phi(2^j x, 2^{2 j} t)$ so that $\widehat{\Phi_j}(\xi, \tau) = \widehat{\Phi}(2^{-j} \xi, 2^{-2 j} \tau)$. Since $\phi_j = \phi_j * \Phi_j$, we have $ f* \phi_j = f* \Phi_j * \phi_j$. By Young's convolution inequality, for $\frac1p = \frac1r + \frac1{q} -1$, we have
\[
\|f* \phi_j \|_{L^p } \leq \| \Phi_j \|_{L^r}\|f* \phi_j \|_{L^{q} }
\leq c 2^{5j (\frac1{q} -\frac1p)}\|f* \phi_j \|_{L^{q} }.
\]
Hence, for $p = \infty$
\begin{align*}
\| f \|_{L^{\infty}  } & \leq  \sum_{j \in {\mathbb Z}}\|f* \phi_j \|_{L^{\infty} } \leq c \sum_{j \in {\mathbb Z}} 2^{\frac{5j}{q}} \|f* \phi_j \|_{L^{q} }
 = \| f \|_{\dot B^{\frac{5}q,\frac{5}{2 q }}_{q1}  }.
\end{align*}
This proves (2) of Lemma \ref{L26}.

We take $ 0 < \te < 1$ and $  q  \leq p < p_2$
satisfying $2 -\frac{5}{q} =  - \frac{5}{p_2}$ and  $ \frac1p = \frac{\te}{q} + \frac{1 -\te}{p_2}$. Note that $2 ( 1-\te) = \frac{5}q -\frac{5}p$.  From (2) of Proposition \ref{P21}, we have
\begin{align*}
\| f \|_{L^{p_2}  } \lesssim \| f \|_{\dot W^{2, 1}_{q}  }.
\end{align*}
Since $( L^{p_2} ,L^q )_{\te, r} = L^{p,r}  $ and
$( \dot W^{2, 1}_{q}, L^q )_{\te, r} = \dot B^{ 2(1 -\te), 1 -\te }_{q,r}  $, we have
\begin{align*}
\| f\|_{L^{p,r}  } \lesssim \| f\|_{\dot B^{ 1 -\te, \frac{1 -\te}{2}}_{q,r} } = \| f\|_{\dot B^{\frac{5}q -\frac{5}p, \frac{5}{2q} -\frac{5}{2p}}_{q,r}  }.
\end{align*}
This proves (1) of Lemma  \ref{L26}.

\section{Proof Lemma \ref{L28}}
\label{AB}
\setcounter{equation}{0}

Since the proofs are similar, we only prove the case $\dot B^{\al, \frac{\al}2}_{p,q}$.

From (4) in Proposition \ref{P21}, 
\begin{align*}
\dot B^{\al, \frac{\al}2}_{p,q}  \subset L^\infty (0, \infty; \dot  B^{\al -\frac2p}_{p,q} (\R)).
\end{align*}
Let $ \frac2p< \al <2$ and $f \in \dot B^{\al, \frac{\al}2}_{p,q} $ with $ f(x,0) =0$ for $x \in \R$. Let $\tilde f(x,t) = f(x,t)$ for $ t > 0 $ and $\tilde f(x,t) =0$ for $ t < 0$. 
Then $\tilde f \in \dot B^{\al, \frac{\al}2}_{p,q} ({\mathbb R}^{4}) $ with $ \| \tilde f\|_{\dot B^{\al, \frac{\al}2}_{p,q} (\R \times (-\infty, t) )} \lesssim \| f\|_{\dot B^{\al, \frac{\al}2}_{p,q} (\R \times (0, t))}$. 
From (4) of Proposition \ref{P21}, 
\begin{align*}
\| f(t) \|_{\dot B^{\al-\frac2p}_{p,q} (\R)} &\lesssim \| \tilde f\|_{ \dot B^{\al, \frac{\al}2}_{p,q} (\R \times (-\infty, t) )  } \lesssim \|  f\|_{ \dot B^{\al, \frac{\al}2}_{p,q} (\R \times (0, t))  } \ri 0 \quad \mbox{as} \quad t \ri 0.
\end{align*}

Let $F(x,t) = \Ga_t * f_0(x)$ for $f_0 (x) = f(x,0)$. Since  $f_0  \in \dot B^{\al -\frac2p}_{p,q} (\R)$, we have
$F \in \dot B^{\al, \frac{\al}2}_{p,q}$ with $\| F \|_{\dot B^{\al, \frac{\al}2}_{p,q} } \lesssim \| f_0 \|_{\dot B^{\al -\frac2p}_{p,q} (\R)}$ and $\| F(t) - f_0 \|_{\dot B^{\al -\frac2p}_{p,q} (\R)} \ri 0 $ as $t \ri 0$. From the above argument, we have $\| F(t) - f (t) \|_{\dot B^{\al -\frac2p}_{p,q} (\R)} \ri 0 $ as $t \ri 0 $. 
Thus,
\begin{align*}
\| f(t) - f_0\|_{\dot B^{\al -\frac2p}_{p,q} (\R)}  \leq
\| f(t) - F(t)\|_{\dot B^{\al -\frac2p}_{p,q} (\R)}  + \| F(t) - f_0\|_{\dot B^{\al -\frac2p}_{p,q} (\R)}  \ri 0 \quad \mbox{as} \quad t\ri 0.
\end{align*}
Therefore, $\dot B^{\al, \frac{\al}2}_{p,q}  \subset C ([0, \infty); \dot  B^{\al -\frac2p}_{p,q} (\R))$.
This proves Lemma \ref{L28} for $ \frac2p < \al < 2 $.

Let $2k +\frac2p< \al < 2k +2$ for $k \in \N $. 
Then we have $D_x^{2k} f \in \dot B^{\al -2k, \frac{\al}2 -k}_{p,q}$ and $D_x^{2k} f_0 \in \dot B^{\al -2k -\frac2p}_{p,q} (\R)$. 
By the same argument, we have
\begin{align*}
\| f(t) - f_0 \|_{\dot B_{p,q}^{\al -\frac2p} (\R)} \leq \| D_x^{2k} ( f(t) -   f_0 ) \|_{\dot B_{p,q}^{\al-2k -\frac2p} (\R)} \ri 0 \quad \mbox{as} \quad  t \ri 0.
\end{align*}
This proves Lemma \ref{L28} for $2k +\frac2p< \al < 2k +2$ for $k \in \N $.
For the case $ 2k   \leq \al \leq 2k  +\frac2p, \,\, k \in {\mathbb N}$, we use the  property of real  interpolation. We complete the proof of Lemma \ref{L28}.


\section{Proof of Lemma \ref{L29}}
\label{AC}
\setcounter{equation}{0}

Let $\Phi = \phi_{-1} + \phi_0  + \phi_1$ and $\Phi_j(x,t) = 2^{5} \Phi(2^j x, 2^{2 j} t)$ so that $\widehat{\Phi_j}(\xi, \tau) = \widehat \Phi(2^{-j} \xi, 2^{-2j} \tau)$. Since $\phi_j = \phi_j * \Phi_j$, we have
\begin{align*}
 \phi_j *(D_t^k D_x^{ \be} f) =  \rho_j * \phi_j * f,
\end{align*}
where   $  \widehat{\rho_j} (\xi, \tau) = \widehat \Phi(2^{-j} \xi, 2^{- 2j} \tau) (2\pi i \tau)^k  (-2 \pi \xi)^\be $. 
The $L^p(\mathbb{R}^{4})$-multiplier norm $M_j$ of $\widehat \rho_{j}(\xi, \tau)$ equals the $L^p(\mathbb{R}^{4})$-multiplier norm of
$\widehat {\rho_{j}^{'} } (\xi,\tau)  :=2^{(|\be| + 2k)j} \widehat  \Phi(\xi,\tau) (2\pi i\tau)^k  (-2\pi \xi )^{\be }$  for $ 1 \leq p \leq \infty$ (see
Lemma 6.1.5 in \cite{BL}). From Theorem 6.1.3 in \cite{BL}, the $L^p(\mathbb{R}^{4})$-multiplier norm of
$\widehat{\rho_{j}^{'} }$  is bounded by $2^{(|\be| +2 k)j}$. 
Thus, 
\begin{align*}
\| D^k_t  D_x^{\be} f\|_{\dot B^{s - |\be| -2k, \frac{s -|\be| -2k}2}_{p,q} ({\mathbb R}^{4})} &  = \big( \sum_{j \in {\mathbb Z}} 2^{ (s - |\be| -2k)qj}\| \phi_j * (D_t^k D_x^{\be} f)\|^q_{L^p ({\mathbb R}^{4})}\big)^\frac1q\\
&\lesssim \big( \sum_{j \in {\mathbb Z}} 2^{2 s  qj}  \| \phi_j * f\|^q_{L^p ({\mathbb R}^{4})}\big)^\frac1q\\
&= \| f\|_{\dot B^{s, \frac{s}2 }_{p,q} ({\mathbb R}^{4})}.
\end{align*}
This proves \eqref{E210}.

Note that $\frac{  |\xi|^{2|\be|} (2\pi i \tau)^k  }{ ( |\xi|^2 + 2\pi i \tau)^{|\be| + k}  }   $ is  $L^p ({\mathbb R}^{4})$-multiplier for $ 1 < p < \infty$ (see Theorem 4.6\'{} in \cite{St}).
Since
\begin{align*}
  ( |\xi|^2 + 2\pi i \tau)^{s -|\be| -k}   (-2\pi \xi)^{2\be} (2\pi i \tau)^k \widehat f(\xi, \tau)
  =  \frac{  |\xi|^{2\be} (2\pi i \tau)^k  }{ ( |\xi|^2 + 2\pi i \tau)^{|\be| + k}  }  ( |\xi|^2 + 2\pi i \tau)^{s }   \widehat f(\xi ,\tau) ,
\end{align*}
we have
\[
\| D^k_t D_x^\be f\|_{\dot W^{s - |\be| -2k, \frac{ s -\be -2k}2}_{p} ({\mathbb R}^{4})} 
\lesssim \|   f\|_{\dot W^{ s,\frac{ s}2 }_{p} ({\mathbb R}^{4})}.
\]
This proves \eqref{E211}. 

\section{Proof of Proposition \ref{P32}}
\label{AD}
\setcounter{equation}{0}

Let $f \in \dot W^{s, \frac{s}2 }_{p} ({\mathbb R}^{4})$ for $s \in {\mathbb R}$. 
Let
\[
\Ga* f(x,t) = 
\begin{cases} \vspace{2mm}
\int_{-\infty}^t \int_{\R} \Ga (x-y, t-s) f(y,s) dyds & \quad t \geq 0,\\
<\Ga (x-\cdot, t -\cdot), f> & \quad  t < 0.
\end{cases}
\]
Here,  $<, >$ is duality pairing between $\dot W^{s, \frac{s}2 }_{p} ({\mathbb R}^{4})$ and $\dot W^{-s, -\frac{s}2 }_{p'} ({\mathbb R}^{4}) = (\dot W^{s, \frac{s}2 }_{p} ({\mathbb R}^{4}))'$  (the dual space of  $\dot W^{s, \frac{s}2 }_{p} ({\mathbb R}^{4})$), where $\frac1p + \frac1{p'} =1$.

Note that  $ {\mathcal F}_{x,t} (h_{s} * \Ga * f)(\xi, \tau) =  (|\xi|^2+2 \pi i\tau)^{\frac{s}2 -1 } {\mathcal F}_{x,t} (f)(\xi, \tau)$, $ s \in {\mathbb R}$, where $h_s$ is defined in Section \ref{S2}.
From the definition of $\dot W^{s, \frac{s}2 }_p ({\mathbb R}^{4} ) $, we have for $s \in {\mathbb R}$,
\[
\|\Ga* f\|_{\dot W^{s, \frac{s}2 }_p ({\mathbb R}^{4}) } = \|  f \|_{\dot W^{ s-2 , \frac{s}2 -1 }_p ({\mathbb R}^{4})}.
\]
Since $D_x \Ga * f = \Ga * D_x f$, we have 
\begin{align}
\label{AD41}
\|  D_x \Ga* f\|_{\dot W^{s, \frac{s}2 }_p ({\mathbb R}^{4} ) } &  = \| D_x f \|_{\dot W^{s-2, \frac{s}2-1 }_p ({\mathbb R}^{4})} 
\lesssim \| f \|_{\dot W^{s-1, \frac{s}2-\frac{1}2 }_p ({\mathbb R}^{4})}.
\end{align}

Let $  \dot W^{-1, -\frac12}_{p0}  $ be a  dual space of  $  \dot W^{1,  \frac12}_{p'} $. Let $ f \in  \dot W^{-1, -\frac12}_{p0} $ and  $ \tilde f \in \dot W^{-1, -\frac12}_{p}  $ be a zero extension of $f$, that is, $< \tilde f, \phi>_{{\mathbb R}^{4}} = <  f, \phi|_{\R \times (0, \infty)}>_{\R \times (0, \infty )}$, so that
$ \|\tilde f\|_{\dot W^{-1, -\frac12 }_{p} ({\mathbb R}^{4})} \leq c \| f \|_{ \dot W^{-1, -\frac12}_{p0} }$. Note that $\Ga  * \tilde f(x,t) = \Ga * f(x,t)$ for $(x,t) \in {\mathbb R}^{4}$. 
From \eqref{AD41} and (2) of Proposition \ref{P21}, we have
\begin{equation}
\label{AD42}
\begin{split}
\| D_x \int_0^t  \Ga (t -s) f(x,s) ds\|_{L^p  } 
= \|D_x  \Ga* \tilde f\|_{L^p ({\mathbb R}^{4})} 
\lesssim \|\tilde  f\|_{\dot W^{-1, -\frac12}_{p} ({\mathbb R}^{4})} 
\lesssim \| f\|_{\dot W^{-1, -\frac12 }_{p0}  }.
\end{split}
\end{equation}
Similarly, we obtain
\begin{align}
\label{AD43}
\|D_x \int_0^t  \Ga (t -s) f(x,s) ds\|_{\dot W^{1, \frac12}_p  } 
\lesssim \| f\|_{L^p}.
\end{align}
Using \eqref{AD42}, \eqref{AD43}, and the properties of   real interpolation of dual spaces, we have for $0<s<1$,
\[
\|  D_x \int_0^t  \Ga (t -s) f(x,s) ds\|_{\dot B^{s, \frac{s}2 }_{p,q}  }  \lesssim \| f\|_{\dot B^{ s -1, \frac{s}2 -\frac12 }_{p,q0}},
\]
where $\dot B^{s-1, \frac{s}2 -\frac12}_{p,q0}  $ is the dual space of  $  \dot B^{-s + 1, -\frac{s}2 + \frac12}_{p', q'} $.

Let $-1 -\frac{5}p = -\frac{5}r $.
Since $ \| f\|_{\dot W^{ -1, -\frac12}_{p0} } \leq c\| f\|_{L^r } $, from \eqref{AD42}, we have
\begin{align}
\label{AD44}
\|D_x \int_0^t  \Ga_\al (t -s) f(x,s) ds\|_{L^p } \lesssim \| f\|_{L^r }.
\end{align}

Let $\frac1{p^*} = \frac{1-s}r + \frac{s}p   $ and $-\frac{5}{p^*} = s_1 -\frac{5}{p_1}$. Using the real interpolation in \eqref{AD43} and \eqref{AD44}, and by Lemma \ref{L26}, we have for $0 < s < 1$,
\[
\|D_x \int_0^t  \Ga(t -s) f(x,s) ds\|_{\dot B^{s, \frac{s}2}_{p,q} } 
\lesssim \| f\|_{L^{p^*,q}} 
\lesssim \| f\|_{\dot B^{s_1,\frac{ s_1}2}_{p_1,  q}}.
\]
This proves (1) of Proposition \ref{P32}.

Let  $|\be| = 2k$ with $k  \in \N $.
Since $D_x^\be  D_x \int_0^t  \Ga (t -s) f(x,s) ds  =  D_x \int_0^t  \Ga (t -s)  D_x^\be   f(x,s) ds$,  from \eqref{AD43}, we have
\begin{align}
\label{AD45}
\begin{split}
&\|D_x \int_0^t  \Ga (t -s) f(x,s) ds\|_{L^p (0, \infty; \dot W^{k}_p (\R))} \\
&\le \|  D_x \int_0^t  \Ga (t -s)D_x^{\be -1}  f(x,s) ds\|_{L^p(0, \infty; \dot W^1 (\R)  }\\
&\lesssim \| D_x^{\be -1}  f\|_{L^p}
\lesssim \| f\|_{L^p(0,\infty;  \dot W^{2k-1}_p (\R))} 
\lesssim \| f\|_{\dot W^{2k-1, k - \frac{1}2 }_p}.
\end{split}
\end{align}

Since
\begin{align*}
D_t^k  D_x \int_0^t  \Ga  (t -s) f(x,s) ds = (-1)^k \De^{k }  D_x \int_0^t  \Ga (t -s) f(x,s) ds + \sum_{l =0}^{k-1} (-1)^l D_t^{l} \De^{k -1-l} D_x f,
\end{align*}
we have, from \eqref{AD45} and Lemma \ref{L29}, 
\begin{equation}
\label{AD46}
\begin{split}
&\|  D_x \int_0^t  \Ga (t -s) f(x,s) ds\|_{L^p (\R; \dot W^k_p (0, \infty))}\\
&\lesssim \|  D_x \int_0^t  \Ga (t -s) f(x,s) ds\|_{L^p (0, \infty; \dot W^{2 k}_p (\R))} + \| \sum_{l =0}^{k-1}   D_t^{l} \De^{k -1-l} D_x f\|_{L^p } \\
&\lesssim \| f\|_{\dot W^{ 2k -1, k-\frac12 }_p }.
\end{split}
\end{equation}
From \eqref{AD45}, \eqref{AD46} and  (3) of Proposition \ref{P24}, we obtain that for $k \in \N$,
\begin{equation}
\label{AD47}
\|  D_x \int_0^t  \Ga (t -s) f(x,s) ds \|_{   \dot W^{2k,k}_p } 
\lesssim \| f\|_{\dot W^{2k-1, k -\frac12 }_p }.
\end{equation}
Using the real interpolation in \eqref{AD43} and \eqref{AD47} and the Sobolev imbedding (see (2) of Proposition \ref{P24}), we have for $1 \le s$, 
\[
\|  D_x \int_0^t  \Ga (t -s) f(x,s) ds\|_{\dot B^{s, \frac{s}2}_{p,q} }  
\lesssim \| f\|_{\dot B^{s -1, \frac{s}2 -\frac12}_{p,q} }.
\]
This proves (2) of Proposition \ref{P32}.

\end{document}